\newcommand{\bburl}[1]{\textcolor{blue}{\url{#1}}}
\newcommand{\be}{\begin{equation}}
\newcommand{\ee}{\end{equation}}
\newcommand{\bea}{\begin{eqnarray}}
\newcommand{\eea}{\end{eqnarray}}
\newtheorem{thm}{Theorem}[section]
\newtheorem{conj}[thm]{Conjecture}
\newtheorem{cor}[thm]{Corollary}
\newtheorem{lem}[thm]{Lemma}
\newtheorem{prop}[thm]{Proposition}
\newtheorem{que}[thm]{Question}
\numberwithin{equation}{section}
\begin{document}

\title{Domination in Direct Products of Complete Graphs}

\author{Harish Vemuri}
\address{Department of Mathematics, Yale University, New Haven, CT 06520}
\email{\textcolor{blue}{\href{mailto:harish.vemuri@yale.edu}{harish.vemuri@yale.edu}}}

\maketitle
\date{\today}

\begin{abstract} 
Let $X_{n}$ denote the unitary Cayley graph of $\mathbb{Z}/n\mathbb{Z}$. We continue the study of cases in which the inequality $\gamma_t(X_n) \le g(n)$ is strict, where $\gamma_t$ denotes the total domination number, and $g$ is the arithmetic function known as Jacobsthal's function. The best that is currently known in this direction is a construction of Burcroff which gives a family of $n$ with arbitrarily many prime factors that satisfy $\gamma_t(X_n) \le g(n)-2$. We present a new interpretation of the problem which allows us to use recent results on the computation of Jacobsthal's function to construct $n$ with arbitrarily many prime factors that satisfy $\gamma_t(X_n) \le g(n)-16$. We also present new lower bounds on the domination numbers of direct products of complete graphs, which in turn allow us to derive new asymptotic lower bounds on $\gamma(X_n)$, where $\gamma$ denotes the domination number. Finally, resolving a question of Defant and Iyer, we completely classify all graphs $G = \prod_{i=1}^t K_{n_i}$ satisfying $\gamma(G) = t+2$. 
\end{abstract}

\section{Introduction} 
Let $R$ be a finite, commutative ring with $1$. The unitary Cayley graph of $R$, denoted $X_R$, is defined as follows: the vertices of $X_R$ are the elements of $R$, and $x$ is adjacent to $y$ if and only if $x-y$ is a unit in $R$. In this paper, we will consider the graph $X_{\mathbb{Z}/n\mathbb{Z}}$, which we shall denote by $X_n$ for convenience. The motivation for studying $X_n$ comes from the theory of graph representation - in particular every finite simple graph $G$ is isomorphic to an induced subgraph of $X_n$ for some $n$. For more information on graph representation, see Gallian's dynamic survey of graph coloring \cite{G}. Properties of $X_n$ are well-studied. For instance, in 2007, Klotz and Sander \cite{KS} determined the chromatic number, independence number, clique number, and diameter of $X_n$. Various other properties were considered in \cite{A, F}. 

It is very natural to view unitary Cayley graphs as direct products of complete graphs. If $G$ and $H$ are graphs, then the \textit{direct product} of $G$ and $H$, denoted $G \times H$, is defined as follows: $V(G \times H)$ is the Cartesian product $V(G) \times V(H)$, and $(u_1, v_1)$ is adjacent to $(u_2, v_2)$ if and only if $u_1$ is adjacent to $u_2$ and $v_1$ is adjacent to $v_2$. For instance, if $n = q_1^{\alpha_1}q_2^{\alpha_2} \cdots q_k^{\alpha_k}$ where $q_1, q_2, \dots, q_k$ are distinct primes and $\alpha_1, \alpha_2, \dots, \alpha_k$ are nonnegative integers, it is straightforward to see by the Chinese Remainder Theorem that \[X_n \cong X_{q_1^{\alpha_1}} \times X_{q_2^{\alpha_2}} \times \cdots \times X_{q_k^{\alpha_k}}.\] In particular, when $\alpha_1=\alpha_2= \cdots = \alpha_k = 1$, note that $X_{n} \cong K_{q_1} \times \cdots \times K_{q_k}$. 

In this paper, we focus on two well-studied graph parameters regarding dominating sets. For a vertex $u$ of $G$, we let $N(u)$ denote the \textit{open neighborhood} of $u$, namely the set of vertices $v$ of $G$ that are adjacent to $u$. Similarly, we let $N[u]$ denote the \textit{closed neighborhood} of $u$, namely the set of vertices $v$ of $G$ that are adjacent to $u$, along with $u$ itself. Finally, for any set $S \subseteq V(G)$, we define $N(S) := \bigcup_{u \in S} N(u)$ and $N[S] = \bigcup_{u \in S} N[u]$. A \textit{dominating set} of $G$ is a set $D \subseteq V(G)$ such that $N[D] = V(G)$, and a \textit{total dominating set} of $G$ is a set $D \subseteq V(G)$ such that $N(D) = V(G)$. The \textit{domination number} of $G$, denoted by $\gamma(G)$, is the minimum cardinality of a dominating set of $G$. Similarly, the \textit{total domination number} of $G$, denoted by $\gamma_t(G)$, is the minimum cardinality of a total dominating set of $G$. It is immediately clear that $\gamma(G) \le \gamma_t(G)$ for all graphs $G$, as every total dominating set is also a dominating set. The domination numbers and total domination numbers of graphs, and particularly of graph products, are of wide interest; for more information see \cite{BKR, HL, M}.

In 2010, Meki{\u s} \cite{M} proved the following lower bound on the domination number of a direct product of complete graphs: 

\begin{thm}[\cite{M}, Theorem 2.1] \label{t+1-lwb}
Let $G = \prod_{i=1}^t K_{n_i}$ where $2 \le n_1 \le n_2 \le \cdots \le n_t$. If $t=2$, then \[\gamma(G) = \begin{cases} 2, & \text{ if } n_1 = 2;\\ 3, & \text{ if } n_1 \ge 3.\end{cases}\] If $t=3$, then $\gamma(G)=4$, and if $t \ge 4$, then $\gamma(G) \ge t+1$ with equality if $n_1 \ge t+1$. 
\end{thm}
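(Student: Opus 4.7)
The plan is to handle the regimes $t = 2$, $t = 3$, and $t \ge 4$ separately, proving matching upper and lower bounds in each. Write $G = K_{n_1} \times \cdots \times K_{n_t}$ and regard its vertices as tuples $(u_1, \ldots, u_t)$.

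For the upper bounds, I would exhibit explicit dominating sets. For $t = 2$, $n_1 = 2$: the pair $\{(0,0),(1,0)\}$ dominates. For $t = 2$, $n_1 \ge 3$: the diagonal $\{(0,0),(1,1),(2,2)\}$ dominates. For $t = 3$: the set $\{(0,0,0),(0,1,1),(1,0,1),(1,1,0)\}$ dominates any $K_{n_1} \times K_{n_2} \times K_{n_3}$; since any coordinate value $\ge 2$ automatically differs from both $0$ and $1$, the verification reduces to the eight vertices of $K_2^3$, each of which is seen directly to be adjacent to one of the four dominators. For $t \ge 4$ with $n_1 \ge t+1$: the rainbow set $\{(i,i,\ldots,i) : 0 \le i \le t\}$ has size $t+1$ and dominates, because any vertex $u$ has at most $t$ distinct coordinate values, so some $i \in \{0,1,\ldots,t\}$ is missing from $\{u_1,\ldots,u_t\}$, and then $(i,\ldots,i)$ differs from $u$ in every coordinate.

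For the lower bounds, the key tool is the \emph{permutation diagonal} trick. Suppose $D = \{v_1,\ldots,v_t\}$ is a purported dominating set, with $v_i = (a_{i,1},\ldots,a_{i,t})$. For each $\sigma \in S_t$, set $w(\sigma) := (a_{\sigma(1),1}, a_{\sigma(2),2}, \ldots, a_{\sigma(t),t})$. Since $\sigma$ is a bijection, $w(\sigma)$ agrees with every $v_i$ in coordinate $\sigma^{-1}(i)$, so no $v_i$ dominates $w(\sigma)$ by adjacency; hence $w(\sigma) \in D$ is forced. When at least two columns of the matrix $A = (a_{i,j})$ consist of $t$ pairwise distinct entries (the ``generic'' case), the relation $w(\sigma) = v_i$ forces $\sigma$ to take the value $i$ at both those columns, which no permutation with $t \ge 2$ can do. This handles $t = 3$ and $t \ge 4$ in the generic case. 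The $t = 2$, $n_1 \ge 3$ lower bound follows from a short three-case analysis on whether the two elements of $D$ share first coordinate, second coordinate, or neither.

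The principal obstacle is the ``degenerate'' case in which at most one column of $A$ has $t$ distinct entries, so $w(\sigma)$ might coincide with some $v_i$ for non-constant $\sigma$. To handle this I would combine two observations. First, the projection $\pi_{\ne k}(D)$ onto the $t-1$ coordinates other than $k$ is a dominating set of $\prod_{j \ne k} K_{n_j}$: any vertex $u'$ in the sub-product and any $a \in [n_k]$ yield a vertex of $G$ that is dominated by some $v \in D$, and projecting that domination shows $u'$ is either equal or adjacent to $\pi_{\ne k}(v)$. Applying the inductive hypothesis $\gamma(\prod_{j \ne k} K_{n_j}) \ge t$ (which is the theorem itself for the smaller value $t - 1 \ge 3$) forces $|\pi_{\ne k}(D)| = t$, i.e.\ $\pi_{\ne k}$ is injective on $D$ for every $k$; equivalently, any two elements of $D$ differ in at least two coordinates. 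Second, the extreme sub-case $|\pi_j(D)| = 1$ (all $v_i$ share the $j$th coordinate) is ruled out directly: the ``shared-$j$th-coordinate'' fiber contains $\prod_{i \ne j} n_i \ge 2^{t-1} > t$ vertices, none of which can be dominated by adjacency from within $D$, forcing $|D| > t$. Combining these with the permutation trick handles the inductive step $t \ge 4$; for the base case $t = 3$, induction is not available, and a finite sub-case analysis on $(|\pi_1(D)|, |\pi_2(D)|, |\pi_3(D)|) \in \{1,2,3\}^3$ together with the extreme sub-case argument closes the lower bound $\gamma \ge 4$.
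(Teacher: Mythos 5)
First, a point of reference: the paper does not prove this statement at all --- it is imported verbatim from Meki{\u s} \cite{M} --- so your proposal can only be measured against the original source, not against anything in this manuscript. Your overall architecture is sound. The four explicit dominating sets are correct (one small slip: in the $t=3$ construction the even-weight vertices of $\{0,1\}^3$ are the dominators themselves and are not adjacent to any other dominator, so the reduction to $K_2^3$ must be phrased with closed rather than open neighborhoods); the forcing $w(\sigma)\in D$ is valid; the two-rainbow-column case is airtight; the projection argument plus induction on $t$ correctly yields that distinct elements of a size-$t$ dominating set differ in at least two coordinates when $t\ge 4$; and the constant-column fiber count is right. (Minor bookkeeping: to exclude $\gamma(G)\le t$ you should first pad a minimum dominating set to one of size exactly $t$, which is possible since $|V(G)|\ge 2^t>t$.)

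The genuine gap is the sentence ``Combining these with the permutation trick handles the inductive step $t\ge 4$.'' In the degenerate case you list the available facts but never produce the contradiction, and it is not automatic; as written this is an assertion, not an argument. It can be closed as follows. Write $w(\mathrm{id})=v_m$, so that $a_{m,j}=a_{j,j}$ for every $j$. For any $i\ne m$ take $\sigma$ to be the transposition exchanging $i$ and $m$; then $w(\sigma)$ agrees with $v_m$ in every coordinate except possibly coordinate $m$, because in coordinate $i$ one has $w(\sigma)_i=a_{m,i}=(v_m)_i$, and in coordinates $j\notin\{i,m\}$ one has $w(\sigma)_j=a_{j,j}=a_{m,j}$. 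Since $w(\sigma)\in D$ and distinct members of $D$ differ in at least two coordinates, $w(\sigma)=v_m$, whence $a_{i,m}=a_{m,m}$ for every $i$; column $m$ is constant, contradicting your fiber argument. (This also makes the rainbow-column dichotomy unnecessary for $t\ge 4$.) The base case $t=3$ is likewise only gestured at, and it is exactly where your two crutches are weakest: the projection argument yields ``differ in at least two coordinates'' only when both remaining factors have at least three vertices, so the subcase $n_1=2$ (e.g.\ $K_2\times K_{n_2}\times K_{n_3}$ with $n_2,n_3$ large, where simple degree counting no longer forces $\gamma\ge 4$) needs its own direct analysis. All of this is fillable, but these two case-closing arguments are the actual content of the lower bound and must be supplied.
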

In 2018, Defant and Iyer \cite{DI} improve this bound, proving the following result: 

\begin{thm}[\cite{DI}, Theorem 2.6]\label{t+1+-lwb}
Let $G = \prod_{i=1}^t K_{n_i}$ where $2 \le n_1 \le n_2 \le \cdots \le n_t$. If $ t \ge 4$ and $n_2 \ge 3$, then \[\gamma(G) \ge t+1 + \left \lfloor\frac{t-1}{n_1-1} \right \rfloor.\]
\end{thm}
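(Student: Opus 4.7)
The plan is to argue by contradiction: assume $D$ is a dominating set of $G$ with $|D| = k \le t + q$, where $q := \lfloor (t-1)/(n_1 - 1)\rfloor$, and exhibit a vertex of $V(G)$ that is neither in $D$ nor adjacent to an element of $D$. The first step is a reformulation. Two vertices $v, d$ of $G$ are adjacent iff $v^i \ne d^i$ for every coordinate $i$, so $v$ fails to be dominated precisely when $v \notin D$ and for every $d \in D$ there is some coordinate $i$ with $v^i = d^i$. Call a vertex with this matching property \emph{bad} with respect to $D$; then $D$ dominates if and only if every bad vertex lies in $D$.

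Next I apply pigeonhole to the first coordinates of elements of $D$. Setting $D_{1,c} := \{d \in D : d^1 = c\}$, some $c \in [n_1]$ satisfies $a := |D_{1,c}| \ge \lceil k/n_1 \rceil$. The assumption $k \le t + q$ combined with $q(n_1 - 1) \le t - 1$ gives $k(n_1-1) \le tn_1 - 1$, hence $k < n_1 t/(n_1 - 1)$, which is equivalent to $\lceil k/n_1 \rceil \ge k - t + 1$. Thus $a \ge k - t + 1$, and $D'' := D \setminus D_{1,c}$ has size $r := k - a \le t - 1$.

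I then reduce to the sub-product $G_1 := K_{n_2} \times \cdots \times K_{n_t}$. A vertex $(c, u) \in V(G)$ with first coordinate $c$ is dominated by $D$ iff $u$ lies in the projection $\pi(D_{1,c}) \subseteq V(G_1)$ (where $\pi$ drops the first coordinate) or $u$ is neighborhood-dominated in $G_1$ by $\pi(D'')$. Consequently the set $B_c$ of $u \in V(G_1)$ satisfying: for every $d \in D''$ there exists $i \in \{2,\ldots,t\}$ with $u^i = d^i$, must be contained in $\pi(D_{1,c})$, so $|B_c| \le a$. To reach a contradiction it suffices to exhibit more than $a$ such $u$. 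Since $r \le t - 1$, enumerate $D'' = \{d_1, \ldots, d_r\}$ and, for each injection $\phi: [r] \to \{2, \ldots, t\}$, build a candidate $u$ by setting $u^{\phi(j)} = d_j^{\phi(j)}$ and choosing the remaining coordinates freely; every such $u$ belongs to $B_c$.

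The main obstacle is to lower bound $|B_c|$ sharply enough to exceed $a \le q + 1 \le t$. When $r \le t - 2$ each injection has at least one free coordinate of size $n_i \ge n_2 \ge 3$, and combining this with the factorially many choices of $\phi$ easily produces more than $a$ bad vertices. The tight case $r = t - 1$ is the crux: $\phi$ is a bijection with no free coordinates, and two bijections give the same $u$ precisely when they can be swapped via coincidences in the coordinate values of $D''$. If $D''$ is in ``general position'' (distinct values in each coordinate of $\{2,\ldots,t\}$), then all $(t-1)!$ bijections give pairwise distinct bad $u$, and $(t-1)! > t \ge a$ for $t \ge 4$. If there are coincidences, a shared value in some coordinate $i$ allows a single assignment to match several $d_j$'s simultaneously, freeing another coordinate whose size $n_i \ge n_2 \ge 3$ again produces the needed extra bad vertices. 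I expect the degenerate analysis to be organized by a secondary pigeonhole on the coordinate exhibiting maximal duplication within $D''$, with the hypothesis $n_2 \ge 3$ entering decisively to guarantee the required multiplicative freedom.
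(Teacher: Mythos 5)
Your skeleton is sound, and it is essentially the route of \cite{DI} (the paper under review does not prove this theorem itself --- it cites it, and the engine it imports for arguments of this type is Lemma~\ref{tech}): pigeonhole on the first coordinate to isolate a fibre $D_{1,c}$ of size $a\ge k-t+1$, note that any $(c,u)$ with $u\in B_c\setminus\pi(D_{1,c})$ is undominated, and reduce to showing $|B_c|>a$. The reformulation of domination, the computation $\lceil k/n_1\rceil\ge k-t+1$ from $k\le t+q$, and the reduction to $G_1$ are all correct.

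The gap sits in the one step that carries the actual difficulty. First, the target bound ``$a\le q+1\le t$'' is not valid in general: $a=k-r$ with $k$ possibly as large as $t+q$, so all you know is $a\le t+q-r$, which equals $q+1$ only in the extreme case $r=t-1$. For instance with $r=t-2$ and $n_1=2$ you must beat $a\le t+1$, not $q+1$; calibrating the counting to $q+1$ understates what must be proved for every $r<t-1$. Second, and more seriously, the lower bound on $|B_c|$ is asserted rather than established. A single injection $\phi$ contributes a box of size $\prod_{i\notin \mathrm{im}\,\phi} n_i\ge 3^{t-1-r}$, which for $r$ near $t-1$ is far below the target, so you must count the union over many injections; but distinct injections produce overlapping or identical boxes exactly when elements of $D''$ share coordinate values, and your remedy (``a shared value\ldots frees another coordinate whose size $\ge 3$'') gains only a factor of $3$ per coincidence, which does not by itself exceed a target that grows linearly in $t$. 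Quantifying precisely how coincidences among the $d_j$ trade off against the number of usable injections is the content of Lemma~\ref{tech} (\cite{DI}, Lemma 2.5), and it is a genuinely involved case analysis; your proposal explicitly defers it (``I expect the degenerate analysis to be organized by\ldots''). What you have is the correct frame around the theorem with its combinatorial core left open.
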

It is worthwhile to mention that the additional assumption that $n_2 \ge 3$ loses no generality due to the following straightforward proposition from \cite{DI}.
\begin{prop}[\cite{DI}, Lemma 2.2]
Let $G = (\prod_{i=1}^s K_2) \times H$, where $s$ is a positive integer and $H$ is a finite simple graph. We have \[\gamma(G) = 2^{s-1} \gamma(K_2 \times H).\]
\end{prop}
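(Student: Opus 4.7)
The plan is to show that $K_2^s$ under the direct product already decomposes into $2^{s-1}$ disjoint copies of $K_2$, after which the proposition will follow from two standard facts about direct products and domination numbers of disjoint unions.

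The first step is to identify the structure of $K_2^s$. Label the vertices of each factor by $\{0,1\}$, so that $V(K_2^s) = \{0,1\}^s$. By definition of the direct product, two strings $x, y \in \{0,1\}^s$ are adjacent in $K_2^s$ iff $x_i$ is adjacent to $y_i$ in $K_2$ for every coordinate $i$, which (since $K_2$ is loop-free) forces $y_i = 1 - x_i$ for every $i$. Hence each vertex $x$ has a unique neighbor, namely its bitwise complement $\bar x$, and the connected components of $K_2^s$ are precisely the $2^{s-1}$ pairs $\{x, \bar x\}$, each an isomorphic copy of $K_2$.

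The second step is to push this decomposition through the product with $H$. The direct product distributes over disjoint union: if $A = A_1 \sqcup \cdots \sqcup A_k$ is a vertex-disjoint union of components (no edges between them), then $A \times B = \bigsqcup_j (A_j \times B)$, because any edge of $A \times B$ requires adjacency in the $A$-coordinate, which forces both endpoints to lie in the same component $A_j$. Applying this with $A = K_2^s$ and $B = H$ gives $G \cong \bigsqcup_{j=1}^{2^{s-1}} (K_2 \times H)$.

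Finally, $\gamma$ is additive over disjoint unions: the domination number of a disjoint union is the sum of the domination numbers of the components, since a dominating set must restrict to a dominating set of each component, and any union of optimal dominators gives a dominating set of the whole. This immediately yields $\gamma(G) = 2^{s-1}\gamma(K_2 \times H)$. There is really no serious obstacle in this argument; the entire substance is the elementary observation that $K_2^s$ under the direct (tensor) product is a perfect matching rather than a connected graph. The only point requiring slight care is in the first step, where one must use the definition of the direct product as given (and not, say, the Cartesian product, for which the corresponding statement would be false).
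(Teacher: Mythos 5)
Your proof is correct. The paper itself does not prove this statement---it is quoted from Defant and Iyer \cite{DI} without proof---but your argument is the standard one and matches theirs: the direct product $\prod_{i=1}^s K_2$ is a perfect matching, i.e.\ a disjoint union of $2^{s-1}$ copies of $K_2$, the direct product distributes over disjoint unions, and $\gamma$ is additive over components. All three steps are justified correctly, including the key observation that adjacency in $K_2^{\times s}$ forces bitwise complementation.
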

As such, we shall be implicitly invoking this proposition by assuming that $n_2 \ge 3$ in many of our results.

A very natural application of these results is to lower bound $\gamma(X_n)$ when $n=q_1q_2 \cdots q_k$ with $q_1,q_2, \dots, q_k$ being distinct primes, wherein $X_n$ is a direct product of complete graphs. Yet, the bound in Theorem \ref{t+1+-lwb} is far from being tight when many of the $n_i$ are small and $t$ is large. To resolve this, we extend Defant and Iyer's methods to prove the following theorem: 

\begin{thm} \label{asymptotic-thm}
Take $t$ to be sufficiently large, and let $G = \prod_{i=1}^t K_{n_i}$, where $2 \le n_1 \le n_2 \le \cdots \le n_t$ and $n_2 \ge 3$. Then for any positive integer $k<(1-\frac{2}{\log_{2e}(t)})t$, we have \[\gamma(G) \ge  (t - k)\left(1+\frac{1}{n_1-1}\right) \cdots \left(1+\frac{1}{n_{k}-1}\right).\] 
\end{thm}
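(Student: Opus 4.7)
The plan is to extend Defant and Iyer's argument by peeling off one coordinate at a time and iterating $k$ times. At each peel I want a multiplicative gain of $\frac{n_i}{n_i - 1} = 1 + \frac{1}{n_i - 1}$ in the domination bound, and at the end the residual product $\prod_{i = k+1}^t K_{n_i}$ contributes the factor $(t-k)$ via Theorem~\ref{t+1-lwb}.

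The main ingredient is a recursive ``peeling'' inequality of the form
\[
\gamma\!\left(\prod_{i=1}^{t} K_{n_i}\right) \;\ge\; \frac{n_1}{n_1-1}\,\gamma\!\left(\prod_{i=2}^{t} K_{n_i}\right) \;-\; c
\]
for an absolute constant $c$. To establish it, let $D$ be a minimum dominating set of $G$ and partition $D = \bigsqcup_{a=1}^{n_1} D_a$ by the first coordinate, where $D_a = \{u \in D : u_1 = a\}$. Let $\pi$ denote the projection to the last $t-1$ coordinates. Any $(a, v') \notin D_a$ must be dominated by some $u \in D \setminus D_a$ whose projection is adjacent to $v'$ in $\prod_{i \ge 2} K_{n_i}$, so $\pi(D_a) \cup \pi(D \setminus D_a)$ is essentially a dominating set of $\prod_{i \ge 2} K_{n_i}$. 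This yields $|D \setminus D_a| \ge \gamma(\prod_{i \ge 2} K_{n_i}) - |D_a| - O(1)$, and summing across $a \in [n_1]$ and using $\sum_a |D_a| = |D|$ gives $(n_1-1)|D| \ge n_1\,\gamma(\prod_{i\ge 2} K_{n_i}) - O(n_1)$, which rearranges to the displayed recursion.

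Iterating the peeling inequality $k$ times produces
\[
\gamma(G) \;\ge\; \prod_{i=1}^{k}\!\left(1 + \frac{1}{n_i - 1}\right) \gamma\!\left(\prod_{i=k+1}^{t} K_{n_i}\right) \;-\; \mathcal{E}_k,
\]
where the accumulated error $\mathcal{E}_k$ is bounded by a geometric sum of the multiplicative factors encountered. Because $n_2 \ge 3$ and the $n_i$ are nondecreasing, the hypotheses of Theorem~\ref{t+1+-lwb} persist for every subproduct that arises, so long as $t - k \ge 4$. Applying Theorem~\ref{t+1-lwb} to the residual product gives $\gamma\!\left(\prod_{i > k}K_{n_i}\right) \ge (t - k) + 1$, and the condition $k < \bigl(1 - \tfrac{2}{\log_{2e} t}\bigr)t$ is designed to supply exactly enough slack that $\mathcal{E}_k \le \prod_{i \le k}\!\bigl(1 + \tfrac{1}{n_i - 1}\bigr)$, so that the ``$+1$'' from Theorem~\ref{t+1-lwb} absorbs the error and we are left with the clean factor $(t-k)$ in front.

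The main obstacle is tracking $\mathcal{E}_k$ uniformly through the iterations: each peel introduces an additive $O(1)$ error that is then amplified by subsequent multiplicative factors of at most $2$, producing a worst-case geometric growth essentially of the form $2^k$. Equating this growth to the available headroom $t - k$ yields a threshold of the shape $k < t - O(t/\log t)$, and the precise constants in $\log_{2e}$ emerge from comparing $\log 2$ with $\log(2e)$ in this calculation — this is where the seemingly mysterious base $2e$ enters. Once the recursive inequality is in hand, the iteration itself and the verification of the explicit threshold are technical but routine.
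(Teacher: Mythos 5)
There is a genuine gap at the heart of your argument: the peeling inequality $\gamma(\prod_{i=1}^{t}K_{n_i}) \ge \frac{n_1}{n_1-1}\gamma(\prod_{i=2}^{t}K_{n_i}) - c$ is not established by the projection argument you give, and the summation step contains an algebraic error. What your domination observation actually shows is that $\pi(D_a)\cup\pi(D\setminus D_a)$ dominates $\prod_{i\ge 2}K_{n_i}$, i.e. $|D\setminus D_a| \ge \gamma(\prod_{i\ge 2}K_{n_i}) - |D_a|$ for each $a$. Summing over $a\in\{1,\dots,n_1\}$, the left side is $(n_1-1)|D|$ but the right side is $n_1\gamma(\prod_{i\ge 2}K_{n_i}) - |D| - O(n_1)$: you silently dropped the term $\sum_a |D_a| = |D|$, which is \emph{not} $O(n_1)$. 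Restoring it, the inequality collapses to the vacuous $|D|\ge\gamma(\prod_{i\ge 2}K_{n_i}) - O(1)$, with no multiplicative gain at all. This is not a repairable bookkeeping slip: a bare projection cannot detect why a large coordinate class $D_a$ forces $D$ itself to be large, and that is exactly the information encoded in Lemma \ref{tech} of Defant and Iyer, which your proposal never invokes.

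Even granting such a recursion, your error analysis misplaces the role of the hypothesis on $k$. Both $\mathcal{E}_k$ and $\prod_{i\le k}(1+\frac{1}{n_i-1})$ depend only on $n_1,\dots,n_k$, so the condition $k<(1-\frac{2}{\log_{2e}t})t$ cannot be what forces $\mathcal{E}_k\le\prod_{i\le k}(1+\frac{1}{n_i-1})$; that comparison holds or fails independently of $t$. In the paper the threshold serves an entirely different purpose. The actual proof is by contradiction: assuming $\gamma(G)$ is below the claimed bound, a greedy pigeonhole over the first $k$ coordinates produces classes $F_{v_1}(1),\dots,F_{v_k}(k)$ whose union has size at least $\gamma(G)-t+k$; Lemma \ref{tech} pins this size to within $1$ of that value; and a counting argument over the permutations in $S_{t-1}$ fixing $1,\dots,k$ produces four permutations $\sigma$ with identical images $x_\sigma\in F_{v_1}(1)$. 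The hypothesis on $k$ is precisely what makes $(t-1-k)!$ exceed $4|F_{v_1}(1)|$ (roughly, $(t-k)^{t-k}$ versus $2^{t}e^{t}$). The resulting coincidences supply one or two additional agreeing classes, and reapplying Lemma \ref{tech} violates its upper bound. To complete your approach you would need either a genuinely new proof of the peeling inequality or a switch to this global pigeonhole-plus-counting scheme.
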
This theorem allows us to derive new asymptotic lower bounds on $\gamma(X_n)$, where $n=q_1q_2\cdots q_t$ is a product of distinct primes. In particular, for any $\epsilon>0$, for sufficiently large $t$, we have \[\gamma(X_n) \ge (1-\epsilon)t \cdot  \prod_{i=1}^t \frac{q_i}{q_i-1}.\] 

Considering direct product graphs that do not (necessarily) arise as unitary Cayley graphs, Defant and Iyer \cite{DI} completely classify when $G = \prod_{i=1}^t K_{n_i}$ satisfies $\gamma(G) = t+1$. In particular, it is clear that for $t \ge 4$, $\gamma(G) =t+1$ if and only if $n_1 \ge t+1$ by combining Theorem \ref{t+1-lwb} and Theorem \ref{t+1+-lwb}. In their paper, they pose the question of completely classifying when $\gamma(G) = t+2$. They provide partial progress, classifying when $\gamma(G) = t+2$ under the assumption that $n_3 \ge t+1$. Furthermore their methods reduce the remaining case of $n_3 \le t$ to  the problem of classifying when $\gamma(G) = t+2$ given that $n_1 = n_2 = n_3 = t$. We resolve this problem, thus completely classifying when $\gamma(G) = t+2$ with the following theorem:
\begin{thm} \label{complete-class}
Let $G = \prod_{i=1}^t K_{n_i}$ where $2 \le n_1 \le n_2 \le \cdots \le n_t, t \ge 4$ and $n_2 \ge 3$. Then $\gamma(G) = t+2$ if and only if one of the following holds: \begin{enumerate}
    \item $n_1 = t,$ and $ n_3 \ge t+1$
    \item $\frac{t+1}{2} < n_1 \le t-1,$ and $ t+1 < n_2$
    \item $n_1 = n_2 = n_3 = t,$ and $ n_4 \ge t+2$.
\end{enumerate}
\end{thm}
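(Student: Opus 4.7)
The plan is to reduce the theorem to a single new subcase using Defant--Iyer's existing partial results, then settle that subcase by separate upper and lower bound arguments. First, Theorem~\ref{t+1+-lwb} forces $\gamma(G) \ge t + 1 + \lfloor (t-1)/(n_1-1)\rfloor$, so $\gamma(G) = t+2$ can occur only when $(t+1)/2 < n_1 \le t$. Under the further hypothesis $n_3 \ge t+1$, Defant--Iyer have already classified $\gamma(G) = t+2$ as being equivalent to cases (1) or (2) of the theorem, and their reduction argument handles $n_3 \le t$ by restricting to $n_1 = n_2 = n_3 = t$. So the full content of the theorem reduces to the following claim: under $n_1 = n_2 = n_3 = t$ (with $t \ge 4$ and $n_2 \ge 3$), one has $\gamma(G) = t+2$ if and only if $n_4 \ge t+2$.

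For the upper bound when $n_4 \ge t+2$, I construct a size-$(t+2)$ dominating set explicitly. A natural family to start with is $v_i = (i, i, i, c_i, c_i, \ldots, c_i)$ for $i = 1, \ldots, t$, with carefully chosen values $c_i \in [n_4]$, together with two auxiliary vertices $v_{t+1}, v_{t+2}$ whose first three coordinates lie in $[t]$ and whose remaining coordinates use the two ``extra'' values $t+1$ and $t+2$ of $[n_4]$ provided by the hypothesis. The vertex $v_i$ dominates $u$ precisely when $i \notin \{u_1, u_2, u_3\}$ and $c_i \notin \{u_4, \ldots, u_t\}$; the two auxiliary vertices are then designed to handle the residual tuples for which no such $i$ exists among $\{1, \ldots, t\}$. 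Verifying that the $c_i$'s and the coordinates of $v_{t+1}, v_{t+2}$ can be chosen to cover every tuple is a finite combinatorial check that exploits the extra slack in $[n_4]$.

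For the lower bound when $n_4 \in \{t, t+1\}$, I assume for contradiction a dominating set $D$ with $|D| = t+2$ and analyze the projections $\pi_j(D) \subseteq [n_j]$. If some coordinate $j \ge 4$ is \emph{unsaturated} (i.e., $\pi_j(D) \subsetneq [n_j]$), choosing an unused value $c \in [n_j] \setminus \pi_j(D)$ and restricting to the slice $\{u : u_j = c\}$ yields a dominating set of the $(t-1)$-fold product $\prod_{j' \ne j} K_{n_{j'}}$ of size at most $t+2$; iterating this reduction and combining with Theorem~\ref{t+1+-lwb} should produce a contradiction. In the contrary \emph{saturated} case where $\pi_j(D) = [n_j]$ for every $j$, the constraint $n_4 \le t+1 < |D|$ forces at least one value in coordinate $4$ to be repeated across $D$, and combined with the fact that coordinates $1, 2, 3$ have range exactly $t$, a pigeonhole argument allows one to construct an undominated tuple explicitly.

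The main obstacle I anticipate is the saturated case of the lower bound: without the ability to pass to a smaller slice, the contradiction must be derived from a careful combinatorial accounting of how the $t+2$ vertices of $D$ distribute their values across all coordinates, exploiting the tight gap between $n_4 \le t+1$ and the target $|D| = t+2$. By contrast, the upper bound is mostly a matter of exhibiting a concrete construction and verifying it works, and the reduction to the new subcase is bookkeeping supplied by Defant--Iyer.
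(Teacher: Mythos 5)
Your reduction to the single subcase $n_1=n_2=n_3=t$ is exactly the paper's: Theorem \ref{partial-class} handles $n_3\ge t+1$, Proposition \ref{reduction} handles $n_3\le t$, and what remains is precisely the claim of Theorem \ref{final-case}. The two bound arguments you sketch for that subcase, however, are not carried out, and the lower-bound strategy you propose does not work as described. For the ``unsaturated'' branch: if some coordinate $j\ge 4$ omits a value $c$, then projecting $D$ away from coordinate $j$ does give a dominating set of the $(t-1)$-fold product of size at most $t+2$; but with $n_1=t$ the lower bound from Theorem \ref{t+1+-lwb} for that smaller product is only $(t-1)+1+\lfloor (t-2)/(t-1)\rfloor = t$, so $t+2\ge t$ yields no contradiction, and iterating only weakens the required bound further. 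For the ``saturated'' branch you defer to an unspecified pigeonhole argument and yourself flag it as the main obstacle; this is where the actual content lies. The paper's argument is uniform (no saturated/unsaturated split) and hinges on Lemma \ref{pigeonhole-lemma}: every fiber $F_v(\ell)=\{z\in D: [z]_\ell=v\}$ has size at most $2$, and any two size-$2$ fibers in distinct coordinates must intersect. Since $n_1=n_2=n_3=t<t+2$, each of coordinates $1,2,3$ contains two disjoint doubleton fibers; the intersection condition forces these six doubletons to be exactly the six $2$-subsets of a fixed $4$-element set $\{u,v,w,z\}\subseteq D$. Then $n_4\le t+1<t+2$ forces a seventh doubleton in coordinate $4$, which must again be a $2$-subset of $\{u,v,w,z\}$ and hence is disjoint from its complementary pair, contradicting Lemma \ref{pigeonhole-lemma}. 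Without that lemma (or an equivalent), your sketch has a genuine gap.

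On the upper bound, your proposed family $v_i=(i,i,i,c_i,\ldots,c_i)$ with two auxiliary vertices is close in spirit to the paper's construction but is not verified, and the verification is not a ``finite combinatorial check'' since $t$ is arbitrary. The paper instead takes the first three coordinates of $d_0,\ldots,d_3$ to be $(0,0,0),(0,1,1),(1,0,1),(1,1,0)$ --- Meki{\u s}'s $4$-element dominating set of a triple product --- and $d_m=(m-2,m-2,m-2,m,\ldots,m)$ for $4\le m\le t+1$, with tail values running over $\{0,1,\ldots,t+1\}$ (this is where $n_4\ge t+2$ enters); the check then splits a putative undominated vertex over $S_1=\{4,\ldots,t\}$ and $S_2=\{1,2,3\}$ and uses that at least five of the $t+2$ vertices are adjacent on $S_1$. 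You would need to supply an analogous argument for your variant before the upper bound can be considered established.
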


Let $g(n)$ be the smallest positive integer $m$ such that any set of $m$ consecutive positive integers contains an element that is coprime to $n$. This arithmetic function is called \textit{Jacobsthal's function} and has been very well-studied due to its intimate connection with prime gaps. See \cite{MP, P} for more information. It is easy to see that $\gamma_t(X_n) \le g(n)$ as $\{0, 1, \dots, g(n)-1\}$ forms a total dominating set of $X_n$, as first stated in 2013 by Maheswari and Manjuri \cite{MM}. However, examples where this inequality is strict were unknown until the recent work of Defant and Iyer \cite{DI}, who  prove the following result:

\begin{thm}[\cite{DI}, Theorem 3.4]
Let $q \equiv 1 \pmod{3}$ be prime, let $k = \frac{2(q-1)}{3}$, and let $q_1, q_2, \dots,$ $q_k \ge q+3$ be primes. Next, let $n = 3q \prod_{i=1}^k q_i$. Then $\gamma_t(X_n) \le g(n)-1$. Consequently, there exist $n$ with arbitrarily many prime factors that satisfy the inequality $\gamma_t(X_n) \le g(n)-1$. 
\end{thm}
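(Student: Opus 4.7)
The plan is to exhibit an explicit total dominating set $D \subseteq \mathbb{Z}/n\mathbb{Z}$ of cardinality at most $g(n)-1$. The natural starting point is $D_0 = \{0, 1, \dots, g(n)-1\}$, which is known to be a total dominating set by the defining property of Jacobsthal's function: for any $v$, the shifts $\{v, v-1, \dots, v-g(n)+1\}$ form $g(n)$ consecutive integers and therefore contain at least one integer coprime to $n$, yielding a neighbor of $v$ in $X_n$.

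First, I would reinterpret total domination via the Chinese Remainder Theorem. Writing $n = 3q\,q_1 \cdots q_k$, a set $D$ total dominates $X_n$ if and only if for every $v \in \mathbb{Z}/n\mathbb{Z}$ there is some $d \in D$ with $v-d$ nonzero modulo each prime factor of $n$. Equivalently, $D$ fails to total dominate precisely when some $v$ has $v-D$ contained in the union of the residue classes $\{x : p \mid x\}$ as $p$ ranges over the prime factors of $n$. This recasts the task as a covering problem in $\prod_p \mathbb{Z}/p\mathbb{Z}$ and makes explicit why the hypotheses bear on the small primes $3$ and $q$ rather than the $q_i$.

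Second, I would try to replace $D_0$ by $D = D_0 \setminus \{d^\ast\}$ for a carefully chosen element $d^\ast$, yielding a set of size $g(n)-1$. The hypotheses are tailored to make this succeed. Because $q_i \ge q+3$, no single $q_i$ can force a long block of non-units inside any interval of length $g(n)$, so the tight obstructions are created by divisibility by $3$ and $q$. The condition $q \equiv 1 \pmod 3$, together with $k = 2(q-1)/3$, is then chosen so that the maximum number of non-units attributable to $3$ and $q$ among any $g(n)-1$ consecutive integers leaves at least two shifts coprime to $3q$; removing one of these from $D_0$ can then be absorbed. Concretely, I would select $d^\ast$ so that its residue modulo $3$ and modulo $q$ is duplicated, for every $v$, by at least one other element of $\{0, 1, \dots, g(n)-1\}$ that is also coprime to $n$ after shifting by $v$.

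The main obstacle is verifying that no $v$ loses its unique witness when $d^\ast$ is deleted. This will require a case analysis based on the residue of $v$ modulo $3q$: since $q \equiv 1 \pmod 3$, the positions of the multiples of $3$ and of $q$ inside any window of length $g(n)$ satisfy rigid congruence constraints, so one can count, for each residue class of $v$ modulo $3q$, how many shifts in $v - D_0$ are coprime to $3q$ and show this is at least two. The assumption $q_i \ge q+3$ is then used to show that the $q_i$-divisibility constraints cannot simultaneously eliminate both coprime-to-$3q$ candidates, so the reduced set $D$ still covers $v$. The final ``consequently'' assertion is immediate once the construction is in hand: for fixed $q$ there are infinitely many primes $\ge q+3$, so one may freely enlarge the collection $\{q_i\}$ to make the number of prime factors of $n$ arbitrarily large.
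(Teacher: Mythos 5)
Your strategy --- start from $D_0=\{0,1,\dots,g(n)-1\}$ and delete a single element $d^\ast$ --- has a genuine gap, and the counting claim on which it rests is false. By the very definition of $g(n)$ there is a run $\{h,h+1,\dots,h+g(n)-2\}$ of $g(n)-1$ consecutive integers in which \emph{every} element shares a factor with $n$; in that window every integer coprime to $3q$ is divisible by some $q_i$, which directly contradicts your assertion that the $q_i$-divisibility constraints cannot simultaneously eliminate all of the coprime-to-$3q$ candidates in a window of length $g(n)-1$. Concretely, the vertex $v=h+g(n)-2$ has $v-D_0=\{h-1,\dots,h+g(n)-2\}$, whose unique element coprime to $n$ is $h-1=v-(g(n)-1)$; thus $g(n)-1$ is the \emph{only} witness dominating this $v$ and cannot be deleted, and symmetrically $0$ is the only witness for $v=h+g(n)-1$. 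For an interior choice of $d^\ast$ you would have to prove that no vertex has its unique witness sitting at exactly that position, which requires controlling the structure of all near-extremal windows; your residue count modulo $3q$ does not address this, precisely because the large primes $q_i$ are capable of wiping out every coprime-to-$3q$ residue in a long window.

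The argument in \cite{DI} (this paper only quotes the statement; the same technique reappears here in the proof of Theorem \ref{size-of-M-thm}) avoids the issue by decoupling the two bounds instead of perturbing the interval $\{0,\dots,g(n)-1\}$. One first proves a \emph{lower} bound on $g(n)$: the hypotheses $q\equiv 1\pmod 3$ and $k=\tfrac{2(q-1)}{3}$ make $k$ exactly the number of integers coprime to $3q$ in a suitable window of about $q$ consecutive integers, and the Chinese Remainder Theorem places each of these into a distinct class $0\pmod{q_i}$ (with $q_i\ge q+3$ ensuring each large prime meets the window at most once), yielding a long coprime-free run. Separately, one constructs a total dominating set that is \emph{not} an interval: a short interval together with one extra vertex chosen by CRT to be $\equiv -1$ modulo every $q_i$ and to lie in a prescribed class modulo $3q$, so that it catches precisely the vertices the interval misses. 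The resulting set has size strictly below the lower bound on $g(n)$, giving $\gamma_t(X_n)\le g(n)-1$ without ever analyzing which length-$g(n)$ windows have a unique coprime element. Your remark on the ``consequently'' clause is fine, although note that Theorem \ref{size-of-M-thm} gives a cleaner route to that part.
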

Defant and Iyer state that "it would be interesting to know whether $g(n)-\gamma(X_n)$ can be arbitrarily large; we have no evidence to either support or refute the claim that this is the case". In \cite{B}, Burcroff extends their argument to prove the following result: \begin{thm}[\cite{B}, Theorem 3.1] \label{gap2-const}
Let $q \equiv 1 \pmod{3}$ be prime, let $k=\frac{2(q-1)}{3}$, and let $q_1, q_2, \dots,$ $q_k \ge 2q+10$ be primes. Next, let $n=6q \prod_{i=1}^k q_i$. Then $\gamma_t(X_n) \le g(n)-2$. Consequently, there exist $n$ with arbitrarily many prime factors that satisfy the inequality $\gamma_t(X_n) \le g(n)-2$. 
\end{thm}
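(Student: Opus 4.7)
The plan is to exhibit an explicit total dominating set $D \subseteq \Z/n\Z$ of size $g(n)-2$, extending the strategy that Defant and Iyer used to achieve the weaker bound $g(n)-1$. Recall that $D$ is total dominating in $X_n$ precisely when, for every $v \in \Z/n\Z$, some $d \in D$ satisfies $\gcd(d-v,n)=1$; equivalently, $d \not\equiv v \pmod{p}$ for every prime $p \mid n$. The set $\{0,1,\dots,g(n)-1\}$ achieves this property by the very definition of $g(n)$, so the task is to shave off two elements from (some shift of) this interval while preserving it.

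First, I would pin down the structure of $g(n)$ for $n = 6q\prod_{i=1}^{k} q_i$ via a covering system on $\Z$. The hypothesis $q \equiv 1 \pmod{3}$ together with $k = 2(q-1)/3$ is designed so that, in a maximal gap between units modulo $n$, the non-unit positions decompose cleanly into residues divisible by $2$, residues divisible by $3$, residues divisible by $q$, and $k$ further positions each killed by exactly one of the large primes $q_i$. The strengthened hypothesis $q_i \ge 2q+10$ guarantees that inside the relevant window each $q_i$ kills only one residue, so the auxiliary primes behave as independent ``bullets'' that can be aimed at prescribed indices by Chinese Remainder Theorem.

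Next, I would locate two indices $s_1, s_2 \in \{0,1,\dots,g(n)-1\}$ such that $D=\{0,1,\dots,g(n)-1\}\setminus\{s_1,s_2\}$ is still total dominating. The new factor of $2$ in $n$ (compared with the Defant--Iyer choice $n=3q\prod q_i$) splits the covering problem by parity: modulo $2$ there are two residue classes, and within each class the redundancy of coprime translates mirrors the Defant--Iyer setting. Correspondingly, one expects $s_1$ and $s_2$ to be chosen of opposite parities, so that the ``spare'' coprime translate guaranteed by the Defant--Iyer argument in each parity class is what certifies that removal still dominates every $v$. The passage from $q_i \ge q+3$ to $q_i \ge 2q+10$ is precisely the slack needed to execute the Defant--Iyer argument once in each parity class simultaneously.

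The main obstacle is the verification step: one must show that for every $v \in \Z/n\Z$, the set $\{d \in \{0,1,\dots,g(n)-1\} : \gcd(d-v,n)=1\}$ contains an element distinct from both $s_1$ and $s_2$. The hard cases are the extremal $v$ that realize (or nearly realize) the Jacobsthal gap, where the set of coprime translates is smallest. I would reduce to these cases by symmetries such as $v \mapsto -v$ and by working modulo $6q$, then run an inclusion--exclusion count that leverages the parity decomposition from the preceding paragraph to show at least three coprime translates always survive. The bookkeeping of this count, especially the case handling when $v$ simultaneously activates the small primes $2, 3, q$ and several of the $q_i$, is where the construction will be most delicate, and is where the precise threshold $q_i \ge 2q+10$ is consumed.
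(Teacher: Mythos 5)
A preliminary remark: the paper does not prove this statement at all --- it is quoted from Burcroff \cite{B} --- so there is no in-paper proof to compare your attempt against. The nearest in-paper relative is the proof of Theorem \ref{size-of-M-thm}, whose template (a Chinese Remainder Theorem construction of a long run of integers sharing a factor with $n$, which forces $g(n)$ up, paired with a separately built small total dominating set) is also the template that Burcroff's argument follows.

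Measured against that, your proposal has a genuine gap: both steps that carry the content of the theorem are deferred rather than carried out. First, you never establish a lower bound on $g(n)$. The hypotheses $q\equiv 1\pmod 3$ and $k=2(q-1)/3$ are there because a suitably placed window of length roughly $2q$ contains exactly $k$ residues coprime to $6q$ (the density of totatives of $6q$ is $\phi(6q)/6q=(q-1)/3q$), so the $k$ primes $q_i$ can be aimed by CRT, one at each such residue, to produce a run of about $2q$ consecutive non-units of $n$; the condition $q_i\ge 2q+10$ is what guarantees each $q_i$ meets the window at most once. This count, and the resulting explicit run length, must appear in the argument, because the inequality $\gamma_t(X_n)\le g(n)-2$ is precisely the comparison of that length against the size of your dominating set. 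Second, and more seriously, the verification that $\{0,1,\dots,g(n)-1\}\setminus\{s_1,s_2\}$ still totally dominates is announced rather than executed: you do not specify $s_1$ and $s_2$, the opposite-parity heuristic is offered without justification, and nothing you write excludes the possibility that some $v$ has exactly two coprime translates in the interval and that these are exactly $s_1$ and $s_2$. Since you also have not pinned down the structure of the near-maximal gaps (that is the first deferred step), the proposed ``inclusion--exclusion over extremal $v$'' cannot even be set up. A completed argument would either do that bookkeeping in full, or sidestep it as the proof of Theorem \ref{size-of-M-thm} does: take an interval of the appropriate length together with a few CRT-chosen extra vertices congruent to $-1$ modulo every $q_i$, so that total domination of $X_n$ reduces to a finite check involving only the small primes $2$, $3$, and $q$.
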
 Burcroff then poses the natural follow-up question: \begin{que} [\cite{B}]\label{gap3-que}
Does there exist a single integer $n$ that satisfies $\gamma_t(X_n) \le g(n)-3$?
\end{que}  In this paper we provide a reinterpretation of the problem that allows us to abstract away the total domination number entirely, thus allowing us to use new computational results on the Jacobsthal function to answer Burcroff's question and extend it much further. In particular, we prove the following theorem regarding the number of integers $n$ that satisfy $\gamma_t(X_n) \le g(n)-j$ for arbitrary $j \ge 0$: 

\begin{thm}\label{size-of-M-thm}
Let $j\ge 0$ be an integer. If there exists a positive integer $s$ such that $\gamma_t(X_s) \le g(s)-j$, then there exist $n$ with arbitrarily many prime factors that satisfy $\gamma_t(X_n) \le g(n)-j$.
\end{thm}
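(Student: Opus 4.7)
Fix a total dominating set $D_s \subseteq \mathbb{Z}/s\mathbb{Z}$ of $X_s$ with $|D_s| = g(s) - j$, and choose integer representatives so that $D_s \subseteq \{0, 1, \ldots, s-1\}$. For any $k \ge 1$ the plan is to select distinct primes $p_1, \ldots, p_k$, all coprime to $s$ and each exceeding a threshold $P = P(s, D_s, k)$ to be specified, and to set $n = sp_1 \cdots p_k$, which has $\omega(s) + k$ distinct prime factors. Since $k$ is arbitrary this already yields $n$ with arbitrarily many prime factors, so the task reduces to exhibiting a total dominating set of $X_n$ of size exactly $g(n) - j$.

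Our candidate is the union
\[
D_n \;=\; D_s \;\cup\; \{\,s,\, s+1,\, \ldots,\, s + g(n) - g(s) - 1\,\} \pmod n,
\]
whose two pieces are disjoint as integers, so $|D_n| = (g(s) - j) + (g(n) - g(s)) = g(n) - j$ as required. To verify that $D_n$ is a total dominating set of $X_n$, fix $y \in \mathbb{Z}/n\mathbb{Z}$ and look at the window of $g(n)$ consecutive integers $\{y - g(n) + 1, \ldots, y\}$. By the definition of $g(n)$ this window contains some $y - d$ coprime to $n$; if $d \in \{s, \ldots, g(n) - 1\} \subseteq \{s, \ldots, s + g(n) - g(s) - 1\}$ (the ``head'' regime, using $s \ge g(s)$), we are done, while if $d \in \{0, \ldots, s-1\}$ (the ``tail'' regime) we must argue that such a coprime can always be realized with $d$ in $D_s$ itself.

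The tail case is the main obstacle. Via the CRT identification $X_n \cong X_s \times K_{p_1} \times \cdots \times K_{p_k}$, it reduces to the question: in the nonempty set $D_s(y_s) := \{d \in D_s : y - d \text{ coprime to } s\}$, can one find $d$ with $y \not\equiv d \pmod{p_i}$ for every $i$? Since the integers of $D_s \subseteq \{0, \ldots, s-1\}$ are pairwise distinct modulo any prime $p_i > s$, at most one element of $D_s(y_s)$ is ``forbidden'' per $p_i$, so the argument succeeds whenever $|D_s(y_s)| > k$. When $|D_s(y_s)| \le k$ the failure of the tail case forces $y$ into a restricted residue class modulo a product of a few $p_i$'s, and the reinterpretation advertised in the introduction is essentially the statement that, for those exceptional $y$'s, a Jacobsthal-type computation using recent bounds on $g(n)$ guarantees that the extended head interval $\{y - s - g(n) + g(s) + 1, \ldots, y - s\}$ (which lies inside the coverage of $D_n$ via the appended block) contains a coprime to $n$; the two failure modes are thereby complementary once $P$ is taken large enough. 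With this reconciliation in hand, letting $k$ increase produces the required family of $n$ with arbitrarily many prime factors, each satisfying $\gamma_t(X_n) \le g(n) - j$.
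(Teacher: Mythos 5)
There is a genuine gap, and it sits exactly where you flag it: the tail case. Your argument establishes total domination only when $|D_s(y_s)| > k$; when $|D_s(y_s)| \le k$ you assert that the head interval must then contain an element coprime to $n$, but you give no mechanism forcing this, and there is none in your setup. By CRT the residue of $y$ modulo $s$ and its residues modulo the various $p_i$ are independent, so an adversary can first pick $y \bmod s$ so that $|D_s(y_s)|$ is small and the block of offsets $[s,\, s+g(n)-g(s)-1]$ contains few integers coprime to $s$, then spend one prime $p_i$ on each element of $D_s(y_s)$ and the remaining primes on the coprime-to-$s$ positions of the head interval (each $p_i > P$ hits at most one offset in the whole range, so these choices never conflict). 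The head interval has length $g(n)-g(s) < g(n)$, so the definition of $g(n)$ gives it no protection; the coprime element guaranteed in the full window $\{y-g(n)+1,\dots,y\}$ can perfectly well sit at an offset in $[0,s-1]\setminus D_s$, where your set has no vertex. A secondary problem with the same root: you place $D_s$ into $X_n$ as literal residues in $[0,s-1]$, so even an offset $d\in D_s(y_s)$ can be destroyed by a single congruence $y\equiv d \pmod{p_i}$; nothing in your construction prevents this.

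The paper's proof is built precisely to close this hole, and it does so with three ingredients your proposal lacks. First, the number of new primes is tied to the interval length: it takes $\ell = k\phi(s)$ primes $r_1,\dots,r_\ell > ks+g(s)$ and the interval $\{0,1,\dots,ks-1\}$, whose length is a multiple of $s$. If $y$ is not adjacent to any of $0,\dots,ks-1$, then among $y,y-1,\dots,y-ks+1$ exactly $k\phi(s)=\ell$ integers are coprime to $s$, and a union bound forces each $r_j$ to divide exactly one of them; in particular $y\not\equiv -1 \pmod{r_j}$ for every $j$. Second, the copies of $D_s$ are lifted by CRT to vertices $x_i$ with $x_i\equiv d_i \pmod s$ and $x_i\equiv -1\pmod{r_j}$ for all $j$, so the previous step guarantees $r_j \nmid y-x_i$, and total domination in $X_s$ finishes the argument. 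Third, the paper never tries to hit $g(n)-j$ on the nose: it proves $\gamma_t(X_n)\le ks+\gamma_t(X_s)$ and, by a separate CRT construction of a run of $ks+g(s)-1$ consecutive non-units, that $g(n)\ge ks+g(s)$, so the gap $j$ is inherited. Without analogues of the counting step and the $\equiv -1$ lift, your construction cannot be salvaged as written.
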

We then use this theorem along with recent computational results of Ziller \cite{Z} on Jacobsthal's function to prove the following theorem, which answers Question \ref{gap3-que} affirmatively.
\begin{thm} \label{gap-16}
There exist $n$ with arbitrarily many prime factors such that $\gamma_t(X_n) \le g(n)-16.$
\end{thm}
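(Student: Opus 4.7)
The plan is to invoke Theorem \ref{size-of-M-thm} with $j = 16$, which reduces the task of producing arbitrarily many $n$ with a gap of at least $16$ to exhibiting a single positive integer $s$ satisfying $\gamma_t(X_s) \le g(s) - 16$. This is the payoff of the reinterpretation highlighted in the introduction: once a single example exists, the theorem manufactures the rest for free.

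To produce such an $s$, I would draw on Ziller's recent computations of Jacobsthal's function on primorials. Choose a primorial $P = p_1 p_2 \cdots p_k$ for which $g(P)$ is known exactly and for which $g(P)$ is comfortably larger than $16$. Recall that $S \subseteq \Z/P\Z$ is a total dominating set of $X_P$ if and only if, for every $v \in \Z/P\Z$, some $s \in S$ satisfies $\gcd(v - s, P) = 1$. The baseline example is $S_0 = \{0, 1, \ldots, g(P) - 1\}$, which works by definition of $g(P)$. For each residue $v \in \Z/P\Z$ set
\[
C_v \ = \ \{\,i \in \{0, 1, \ldots, g(P) - 1\} : \gcd(v - i, P) = 1\,\},
\]
which is nonempty by definition of $g(P)$. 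The goal is to find a $16$-element subset $T \subset S_0$ such that $C_v \not\subseteq T$ for every $v$; then $S_0 \setminus T$ remains a total dominating set of $X_P$ and has size $g(P) - 16$.

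Heuristically such a $T$ should exist for a sufficiently large primorial, because the residues $v$ whose $C_v$ is dangerously small are precisely those near the extremal Jacobsthal windows pinpointed by Ziller's analysis, and these windows constrain only a small set of residues modulo $P$. The main obstacle will be the concrete verification: certifying that a specific candidate $T$ is not contained in any $C_v$ amounts to checking, for every residue class modulo $P$, that it has a coprime representative at a position outside $T$. I anticipate the argument will be at least partly computer-assisted, leveraging the finite data from Ziller's computation to both locate the extremal windows and carry out the final verification; it will resemble the structured case analyses of Defant--Iyer and Burcroff, but scaled up to exploit the much larger "removal budget" afforded by a bigger primorial.
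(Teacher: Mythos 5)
There is a genuine gap: you correctly reduce the theorem to exhibiting a single $s$ with $\gamma_t(X_s) \le g(s)-16$ via Theorem \ref{size-of-M-thm}, but you never actually produce that $s$. Your plan --- delete a $16$-element subset $T$ from the interval $\{0,1,\dots,g(P)-1\}$ while keeping a total dominating set of $X_P$ for a primorial $P$ --- is an unproven heuristic. Nothing in the known results guarantees that $\gamma_t(X_P) \le g(P)-16$ for any primorial, or even that $\gamma_t(X_P) < g(P)$; the only previously known instances of strict inequality (Defant--Iyer, Burcroff) required delicately constructed moduli and achieved gaps of only $1$ and $2$. Certifying your candidate $T$ would require checking all $P$ residue classes, and you give no argument that a valid $T$ exists; ``the extremal windows constrain only a small set of residues'' is not a proof.

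The paper's route avoids this entirely and never improves on the trivial total dominating set for any graph. The key tool you are missing is Lemma \ref{reinterp-lemma}: if $n_i \le m_i$ for all $i$, then $\gamma_t\bigl(\prod_i K_{n_i}\bigr) \ge \gamma_t\bigl(\prod_i K_{m_i}\bigr)$. Combined with Ziller's computation that $h(41)=550$ while $H(41)=566$, one takes $P = p_1\cdots p_{41}$ (the first $41$ primes) and $Q = q_1\cdots q_{41}$ (the extremal $41$-prime squarefree number); since $p_i \le q_i$ coordinatewise, the lemma gives $\gamma_t(X_Q) \le \gamma_t(X_P) \le g(P) = g(Q)-16$, so $Q \in M_{16}$ with no construction of any nontrivial dominating set required. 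The ``reinterpretation'' advertised in the introduction is precisely this monotonicity, which abstracts the total domination number away and reduces everything to the arithmetic gap $H(41)-h(41)$; your proposal instead re-engages with the combinatorics of dominating sets, which is exactly the difficulty the paper is designed to sidestep.
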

This large improvement on the status quo leads us naturally to the following conjecture which is stronger than Defant and Iyer's original question (note that we shall phrase this conjecture differently in Section \ref{jacobsthal}, in accordance with our reinterpretation of the problem): 
\begin{conj} \label{unbounded-gap}
For all positive integers $j$, there exist $n$ with arbitrarily many prime factors such that $\gamma_t(X_n) \le g(n)-j$. 
\end{conj}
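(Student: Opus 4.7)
The plan is to exploit Theorem \ref{size-of-M-thm} to reduce the conjecture to a purely single-instance statement: for every $j \ge 1$, exhibit a single integer $s = s(j)$ satisfying $\gamma_t(X_s) \le g(s) - j$. If this can be done, Theorem \ref{size-of-M-thm} automatically upgrades any such $s$ into a family of $n$ with arbitrarily many prime factors. So the substantive task is to manufacture single instances of unboundedly large ``Jacobsthal gap.''

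The first step would be to invoke the reinterpretation introduced in Section \ref{jacobsthal}, which translates $\gamma_t(X_n)$ into a combinatorial covering quantity in $\Z/n\Z$: one seeks a small multiset of residues whose union of ``unit-translates'' is all of $\Z/n\Z$, whereas $g(n)$ corresponds to the length of the worst gap between unit-translates of a consecutive run. The conjecture then becomes the assertion that ``efficient'' covers can be built using strictly fewer residues than the $g(n)$ integers one would naively use by taking a block $\{0,1,\dots,g(n)-1\}$. I would then attempt to iterate Burcroff's construction from Theorem \ref{gap2-const}: that construction saves $2$ residues by cleverly replacing a short consecutive block with a more efficient non-consecutive pattern using the small prime $2$. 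The natural generalization is to introduce additional small primes (e.g.\ using $6q \prod q_i$, then $30 q \prod q_i$, then $210 q \prod q_i$, \dots) so that, each time one incorporates another member of the primorial, one can locally rearrange the dominating set to save a bounded but nonzero number of additional residues. Combined with Ziller's computational data on $g(n)$ for primorials, one hopes to show that each additional small prime contributes a positive saving in the gap.

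An alternative, and probably more promising, avenue is to chain constructions multiplicatively. If $s_1$ gives gap $j_1$ and $s_2$ gives gap $j_2$ with $\gcd(s_1,s_2) = 1$, one would try to show that $s_1 s_2$ (or a modification) yields gap at least $j_1 + j_2 - O(1)$, by merging the two total dominating sets via Chinese Remainder Theorem into a product cover whose ``footprint'' in the Jacobsthal covering interpretation is well-controlled. This would suffice immediately, since starting from the known $j = 16$ case one could then inductively reach any $j$. The main obstacle, and the reason this remains a conjecture, is precisely the merge step: when one forms a CRT product of two efficient covers, the savings in one factor and the savings in the other factor can interact destructively inside the larger interval $[0, g(s_1 s_2))$, because $g$ does not multiply cleanly. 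Controlling this interaction — equivalently, showing that Jacobsthal gaps are sufficiently ``independent'' across coprime factors — is where any proof attempt will concentrate its difficulty, and would likely require new quantitative results on the distribution of $g(n)$ beyond Ziller's tables. Short of such a structural result, the conjecture seems out of reach of the purely computational approach used to obtain Theorem \ref{gap-16}.
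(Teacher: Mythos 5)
This statement is a conjecture in the paper, not a theorem: the paper does not prove it, and neither do you. Your opening reduction is exactly the paper's own strategy --- Theorem \ref{size-of-M-thm} converts a single witness $s$ with $\gamma_t(X_s) \le g(s) - j$ into a family with arbitrarily many prime factors, and Lemma \ref{reinterp-lemma} further reduces the search for witnesses to a purely arithmetic comparison of Jacobsthal values $g(q_1\cdots q_k) \le g(r_1\cdots r_k) - j$ with $q_i \le r_i$. The paper stops there, recording this arithmetic statement as a separate conjecture supported by Ziller's data. So the framing of your proposal is sound and matches the paper; the problem is that everything after the framing is the actual content of the conjecture, and it is not established.

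Concretely, the gap is that you never produce, for arbitrary $j$, a single integer $s(j)$ with $\gamma_t(X_{s(j)}) \le g(s(j)) - j$. The two routes you sketch both fail to close. Iterating Burcroff's construction with additional primorial factors is not known to yield a \emph{positive} saving per new small prime; her argument for the saving of $2$ over the saving of $1$ is already delicate and there is no proof that the pattern continues, which is precisely why the paper abandons that approach in favor of the computational one. The multiplicative chaining via CRT is the more natural idea, but as you yourself observe, $g$ does not behave multiplicatively: $g(s_1 s_2)$ is not controlled from below by anything like $g(s_1) + g(s_2)$, so the claim that the gaps add up to $j_1 + j_2 - O(1)$ is unsupported and is exactly where a proof would have to do new work. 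Acknowledging an obstacle is not the same as overcoming it, so the proposal should be regarded as a correct restatement of the reduction already in the paper plus an honest but inconclusive discussion of how one might proceed --- not a proof.
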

The organization of this paper is as follows: in Section \ref{asymptotics}, we prove Theorem \ref{asymptotic-thm} and discuss how it can be used to establish asymptotic lower bounds on $\gamma(X_n)$. In Section \ref{edge-case}, we prove Theorem \ref{complete-class}. In Section \ref{jacobsthal}, we state some recent computational results on Jacobsthal's function, describe our reinterpretation of Question \ref{gap3-que}, and prove Theorems \ref{size-of-M-thm} and \ref{gap-16}. 

\subsection{Notation}
Let $G = \prod_{I=1}^t K_{n_i}$. We shall identify the vertices of $K_{n_i}$ with the elements of $\mathbb{Z}/n_i \mathbb{Z}$ for convenience. Note that we shall still be viewing $V(K_{n_i})$ and $V(K_{n_j})$ as disjoint despite the fact that we may identify some of their vertices with the same number. Now every vertex of $G$ can be identified as a $t$-tuple in which the $i$th coordinate is a vertex of $K_{n_i}$. Let $x$ be a vertex of $G$. We let $[x]_i$ denote the $i$th coordinate of $x$ when it is viewed as a $t$-tuple. Hence $[x]_i$ is a vertex of $K_{n_i}$. 

\section{Bounds on Domination in Direct Products of Complete Graphs} \label{asymptotics}
In order to improve upon the existing lower bounds on the domination numbers of direct products of complete graphs, we require the following technical lemma from \cite{DI}: 
\begin{lem}[\cite{DI}, Lemma 2.5] \label{tech}
Let $G = \prod_{i=1}^{t} K_{n_i}$, where $2 \le n_1 \le n_2 \le \cdots \le n_t$, $t \ge 4$, and $n_2 \ge 3$. Let $D$ be a dominating set of $G$ of minimum size and assume there exist disjoint, nonempty $E_1, E_2, \dots, E_k$ and a corresponding set of (distinct) indices $\{i_1, \dots, i_k\} \subseteq \{1,2, \dots, t\}$ such that for each $j \in \{1,2, \dots, k\}$, all elements of $E_j$ agree on the $i_j$th coordinate. Then letting $E = \bigcup_{j=1}^k E_j$, we have \[|E| = \sum_{j=1}^k |E_j| \le \gamma(G)-t+k+2.\] Moreover, if $|E| \ge \gamma(G)-t+k+1$, we have \[n_i \in \{2,3\} ~ \text{ for every } ~ i \not \in \{i_1, i_2, \dots, i_k\},\] and if $|E|=\gamma(G)-t+k+2$, we have $n_1=2, E=D$, and \[\{i_1,i_2, \dots, i_k\} = \{1,2, \dots, t\} \setminus \{1, h\}\] for some $h \in \{2,3, \dots, t\}$ with $n_h=3$. 

\end{lem}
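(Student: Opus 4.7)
The plan is to translate the desired inequality $|E| \le \gamma(G) - t + k + 2$ into the equivalent lower bound $|D'| \ge t - k - 2$, where $D' := D \setminus E$, and obtain this bound by embedding a domination problem in a smaller direct-product graph. Write $I := \{i_1, \ldots, i_k\}$, $\hat I := \{1, \ldots, t\} \setminus I$, and for each $i$ and each $v$ let $L_{i,v}$ denote the layer of vertices whose $i$-th coordinate equals $v$. The key structural observation is that no element of $E$ is adjacent to any vertex in the slice $A := \bigcap_{j=1}^{k} L_{i_j, c_j}$: adjacency in a direct product of complete graphs requires disagreement in every coordinate, but elements of $E_j$ and of $A$ agree on coordinate $i_j$. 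Consequently each $y \in A$ is either an element of $D \cap A$ or is adjacent to some element of $D' \setminus \bigcup_j L_{i_j, c_j}$ (since an element inside $\bigcup_j L_{i_j, c_j}$ would agree with $y$ on at least one $i_j$-coordinate).

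Now identify $A$ with $V(G')$ for $G' := \prod_{\ell \in \hat I} K_{n_\ell}$ via projection to $\hat I$. The observation above says exactly that the multiset $\pi_{\hat I}(D \cap A) \cup \pi_{\hat I}(D' \setminus \bigcup_j L_{i_j, c_j})$ is a dominating set of $G'$. I would then invoke Theorems \ref{t+1-lwb} and \ref{t+1+-lwb} applied to $G'$ (which has $t-k$ factors) to obtain a lower bound on $\gamma(G')$. The naive chain of inequalities only gives $|D| \ge \gamma(G')$, which is insufficient; the sharpening comes from decomposing $D = E \sqcup (D' \cap \bigcup_j L_{i_j, c_j}) \sqcup (D' \setminus \bigcup_j L_{i_j, c_j})$ and noting that only the last piece, together with $D \cap A = (E \cap A) \sqcup (D' \cap A)$, contributes to the $G'$-dominating set. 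Careful bookkeeping — in particular, tracking how many elements of $E$ can land in $A$ versus in the "side" layers $L_{i_j, c_j} \setminus A$ — then yields the $-2$ slack that appears in the statement, with the $2$ corresponding to the two extra elements of $D$ allowed by the hypothesis $\gamma(G') \ge (t-k)+1$.

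For the equality case $|E| = \gamma(G) - t + k + 2$, every inequality above must be tight, forcing: (a) $\gamma(G')$ equals the minimum value $(t-k)+1$; (b) the projections of $D \cap A$ and of $D' \setminus \bigcup_j L_{i_j, c_j}$ do not collide in $V(G')$; and (c) the leftover slack in the decomposition of $D$ vanishes. Condition (a), combined with the equality clauses of Theorems \ref{t+1-lwb} and \ref{t+1+-lwb}, forces $n_\ell \in \{2,3\}$ for all $\ell \in \hat I$ (this handles the weaker conclusion $|E| \ge \gamma(G)-t+k+1$); pushing further and using the refined bound when the minimum $n_\ell$ in $\hat I$ equals $2$ pins down that exactly one $h \in \hat I$ has $n_h = 3$ and the other $\hat I$-coordinate is $n_1 = 2$, giving the claimed structure $\{i_1, \ldots, i_k\} = \{1, \ldots, t\} \setminus \{1, h\}$ and $E = D$.

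The hard part is the quantitative sharpening in the second paragraph: squeezing the promised $-2$ out of the projected domination inequality, rather than only obtaining $|D| \ge (t-k)+1$. This requires combining three things carefully — the lower bound $\gamma(G') \ge (t-k)+1$, the fact that $E$'s projections to $\hat I$ can overlap with $D'$'s projections, and the minimality of $D$ — and is the step where the dichotomy between $|E|$ and $|D'|$ has to be untangled. A secondary technical nuisance is the boundary regime $t - k \le 3$, where Theorem \ref{t+1-lwb}'s piecewise form for small $t$ must be substituted in place of the generic bound, but this case is small enough to check directly.
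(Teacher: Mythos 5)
First, a point of reference: the paper does not prove this statement at all --- it is imported verbatim from Defant and Iyer as \cite{DI}, Lemma 2.5, and used as a black box, so there is no in-paper proof to compare against. Judged on its own terms, your proposal has a genuine gap at precisely the step you yourself flag as ``the hard part,'' and I do not believe that step can be repaired within your framework. Your structural observation is correct (any $y \in A$ is either in $D \cap A$ or adjacent to an element of $B := D' \setminus \bigcup_j L_{i_j,c_j}$), but the resulting inequality is $\gamma(G') \le |D \cap A| + |B|$, and since $B$ and $D' \cap A$ are disjoint subsets of $D' = D \setminus E$, the most it can yield is $|D'| \ge |B| + |D' \cap A| \ge \gamma(G') - |E \cap A|$. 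To reach $|D'| \ge t-k-2$ from $\gamma(G') \ge t-k+1$ you would need $|E \cap A| \le 3$, and nothing bounds $|E \cap A|$: elements of $E$ lying in the slice $A$ genuinely help dominate $A$ (each dominates itself), so they cannot be excluded from the $G'$-dominating set, and they can be numerous. Concretely, take $G = K_3^t$, $k=1$, and $E_1 = F_v(1)$ a largest fiber of a minimum dominating set; then $|E \cap A| = |E_1| \ge \gamma(G)/3 \gtrsim t/2$, your inequality collapses to $\gamma(G) \ge \gamma(G')$, and it says nothing whatsoever about $|E_1|$ --- yet the lemma is nearly tight in this example ($|E_1| \le \gamma(G)-t+3 \approx t/2+3$). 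So no amount of bookkeeping on that same inequality will recover the bound.

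The argument actually used in \cite{DI} runs in the opposite direction: assuming $|D \setminus E| \le t-k-3$, one assigns to each element of $D \setminus E$ its own private coordinate in $\hat I$, sets the $i_j$th coordinate equal to the common value $c_j$, and observes that any vertex agreeing with each element of $D$ in its assigned coordinate is adjacent to nothing in $D$ and hence must itself lie in $D$; the three or more leftover free coordinates in $\hat I$ then force too many such vertices into $D$, giving the contradiction, and in the near-equality regimes the same construction forces $n_i \in \{2,3\}$ for $i \notin \{i_1,\dots,i_k\}$ (a free coordinate with $n_i \ge 4$ would produce too many forced vertices). This also exposes a second problem with your sketch: you derive $n_\ell \in \{2,3\}$ from tightness of $\gamma(G') \ge (t-k)+1$, but by Theorem \ref{t+1-lwb} that bound is attained when the $n_\ell$ with $\ell \in \hat I$ are \emph{large} (at least $t-k+1$), so the equality clauses you invoke push in exactly the wrong direction.
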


As stated in \cite{DI}, an intuitive way to think about the statement of this lemma is as follows: if $D$ is a dominating set of minimum size, and we can find disjoint sets $E_1, E_2, \dots, E_k \subseteq D$ with a corresponding set of indices $\{i_1, i_2, \dots, i_k\} \subseteq \{1, 2, \dots, t\}$ such that all the elements in $E_j$ agree on the $i_j$th coordinate, then the size of $D$ cannot be too small relative to $|\bigcup_{i=1}^k E_i|$. In particular, we know that $|\bigcup_{i=1}^k E_i| \le |D|-t+k+2$, and if $|\bigcup_{i=1}^k E_i| \ge |D|-t+k+1$, we can deduce extra restrictions on the $n_i$ as well as the $i_j$.

A very rough outline of our proof of Theorem \ref{asymptotic-thm} is as follows: we greedily pick $E_i$ by invoking the pigeonhole principle on each coordinate individually, and then Lemma \ref{tech} allows us to lower bound the domination number of $G$ after performing some more involved analysis on the elements in the dominating set.

\begin{proof}[Proof of Theorem \ref{asymptotic-thm}]
Suppose for the sake of contradiction that for some $t$ sufficiently large, we have \[\gamma(G) \le (t - k)\left(1+\frac{1}{n_1-1} \right) \cdots \left(1+\frac{1}{n_{k}-1} \right).\] Rearranging this gives us \[\gamma(G)\left(1-\left(1-\frac{1}{n_1}\right)\cdots \left(1-\frac{1}{n_{k}} \right) \right) \ge \gamma(G) - t+k.\] Now let $D$ be a dominating set of $G$ with cardinality $\gamma(G)$, and for each vertex $v$ of $K_{n_i}$ let $F_v(i)$ denote the set of vertices in $D$ whose $i$th coordinate is $v$. By the pigeonhole principle, we know that there exists a vertex $v_1 \in V(K_{n_1})$ such that \[|F_{v_1}(1)| \ge \frac{\gamma(G)}{n_1},\] and similarly, for each $2 \le i \le k$, there exists a vertex $v_i \in V(K_{n_i})$ satisfying \[\left|F_{v_i}(i) \setminus \bigcup_{j=1}^{i-1}F_{v_j}(j) \right| \ge \frac{\gamma(G) - \sum_{j=1}^{i-1}|F_{v_j}(j)|}{n_i}.\] It follows from a simple induction argument that \[\left|\bigcup_{i=1}^{k}F_{v_i}(i) \right| \ge \gamma(G)\left(1-\left(1-\frac{1}{n_1}\right)\cdots \left(1-\frac{1}{n_{k}}\right) \right) \ge \gamma(G)-t+k. \] Now by hypothesis, we know that $k<(1-\frac{2}{\log_{2e}(t)})t<t-2$ for all sufficiently large $t$. Then applying Lemma \ref{tech} with $E_j = F_{v_j}(j)$ and $i_j = j$ for all $1 \le j \le k$, we know that we cannot have $|\bigcup_{i=1}^{k}F_{v_i}(i)|= \gamma(G) - t +k +2$. Thus we must have $|\bigcup_{i=1}^{k}F_{v_i}(i)| \le \gamma(G) - t +k +1$. Now from our assumption that $n_1 \ge 2,n_2 \ge 3$, we know that \[\gamma(G) \le (t-k_t)\left(1+\frac{1}{n_1-1} \right)\cdots \left(1+\frac{1}{n_{k}-1}\right) \le 2(t-k) \cdot \left(\frac{3}{2} \right)^{k-1}.\] Next, choose distinct vertices $d_1, d_2, \dots,d_{k}$ with $d_i\in F_{v_i}(i)$ for all $i$.  Furthermore, take $d_{k+1}, \dots, d_{t-1} \in D \setminus \bigcup_{i=1}^{k}F_{v_i}(i)$ to be distinct vertices. Finally, if $|\bigcup_{i=1}^{k}F_{v_i}(i)| = \gamma(G)-t+k$, take $d_t\in D \setminus \bigcup_{i=1}^{k}F_{v_i}(i)$ to be  distinct from all the other $d_i$. Otherwise, we know that $|\bigcup_{i=1}^{k}F_{v_i}(i)| = \gamma(G)-t+k + 1$, and we may take $d_t \in \bigcup_{i=1}^{k}F_{v_i}(i)$ to be distinct from all the other $d_i$.  \\
\\
Let $S_{t-1}(k)$ denote the set of elements of the symmetric group $S_{t-1}$ that fix the elements $1, 2, \dots, k$. Consider the map $f:S_{t-1}(k) \rightarrow F_{v_1}(1)$, defined by $\sigma \mapsto x_{\sigma}$, where $[x_{\sigma}]_i = [d_{\sigma(i)}]_i$ for all $i \in \{1,2, \dots, t-1\}$ and $[x_{\sigma}]_t = [d_t]_t$. It is clear that $f(\sigma) \in F_{v_1}(1)$ as we have that $[x_{\sigma}]_1 = [d_{\sigma(1)}]_1=[d_1]_1$. Now observe that \begin{align*}\frac{|S_{t-1}(k)|}{|F_{v_1}(1)|} &\ge \frac{(t-1-k)!}{2(t-k) \cdot (\frac{3}{2})^{k-1}} \\ &>\frac{(t-1-k)!}{2^{t-2} \cdot (\frac{3}{2})^{k}} \\ & > \frac{(\frac{t-k}{e})^{t-k}}{2^{t-2} \cdot e^k} \\ & > \frac{(t-k)^{t-k}}{2^{t-2} \cdot e^t} \\ &> \frac{(2t/\log_{2e}(t))^{2t/\log_{2e}(t)}}{2^{t-2} \cdot  e^t} \\ &> 4,\end{align*} where the last inequality follows upon clearing denominators, taking logs of both sides, and taking $t$ to be sufficiently large. \\
\\
This in turn implies that there exist distinct $\sigma_1, \sigma_2, \sigma_3, \sigma_4 \in S_{t-1}(k)$ such that $x_{\sigma_1}=x_{\sigma_2} = x_{\sigma_3} = x_{\sigma_4}$. It is easy to see that one of the following must hold: \begin{itemize}
    \item There is some $q \in \{k+1, \dots, t-1\}$ such that $\sigma_1(q), \sigma_2(q), \sigma_3(q), \sigma_4(q)$ are all distinct.
    \item There are distinct $q, q' \in \{k + 1, \dots, t-1\}$ such that $\sigma_1(q) \neq \sigma_2(q)$ and $\sigma_3(q') \neq \sigma_4(q')$. 
\end{itemize} Suppose the first case holds. We can apply Lemma \ref{tech} with $E_j = F_{v_j}(j)$ for $1 \le j \le k$,  $E_{k+1}= \{d_{\sigma_1(q)}, d_{\sigma_2(q)}, d_{\sigma_3(q)}, d_{\sigma_4(q)}\}$ and with $i_j = j$ for $1 \le j \le k$, and $i_{k+1}=q$. Since $|\bigcup_{i=1}^{k+1}E_i| \ge \gamma(G)-t+k + 1 + 3$, we get an immediate contradiction. \\
\\
Now suppose that the second case holds. Again we can apply Lemma \ref{tech} with $E_j = F_{v_j}(j)$ for $1 \le j \le k$, $E_{k+1} = \{d_{\sigma_1(q)}, d_{\sigma_2(q)}\}, E_{k+2} = \{d_{\sigma_3(q')}, d_{\sigma_4(q')}\}$ and with $i_j = j$ for $1 \le j \le k$, $i_{k+1}=q,$ and $i_{k+2}=q'$. Since $|\bigcup_{i=1}^{k+2}E_i| = \gamma(G)-t+k+2+2$, we know that $\{1,2, \dots, t\} \setminus \{i_1, \dots, i_{k+2}\}=\{1,h\}$ for some $h \in \{2, 3, \dots, t\}$, a contradiction as $i_1=1$. Hence our original assumption that \[\gamma(G) \le (t - k_t)\left(1+\frac{1}{n_1-1} \right) \cdots \left(1+\frac{1}{n_{k_t}-1}\right)\] must be false, as desired.
\end{proof}

We now use this result to prove new asymptotic lower bounds on $\gamma(X_n)$.
\begin{cor} \label{asymptotic-lwb}
Let $\epsilon>0$. For sufficiently large $t$, if $n = q_1 q_2 \cdots q_t$ where $q_1,q_2, \dots, q_t$ are distinct primes, then we have \[\gamma(X_n) \ge (1-\epsilon)t \cdot \prod_{i=1}^t \frac{q_i}{q_i-1}.\]
\end{cor}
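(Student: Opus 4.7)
The plan is to apply Theorem \ref{asymptotic-thm} to $G \cong X_n = K_{q_1} \times \cdots \times K_{q_t}$, ordered so that $q_1 < q_2 < \cdots < q_t$; since the $q_i$ are distinct primes, at most one equals $2$, so the hypothesis $n_2 \ge 3$ is automatic. Dividing the target inequality by $t \prod_{i=1}^t q_i/(q_i - 1)$ reduces the corollary to producing, for each sufficiently large $t$, an integer $k$ with $k < \bigl(1 - 2/\log_{2e}(t)\bigr) t$ satisfying
\[
1 - \frac{k}{t} \ge (1 - \epsilon) \prod_{i=k+1}^t \frac{q_i}{q_i - 1}.
\]

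The key observation is that the tail product can be bounded uniformly in the $q_i$. Because $q_1 < q_2 < \cdots < q_t$ are distinct primes, $q_i \ge p_i$ (the $i$-th prime), and because $q \mapsto q/(q-1)$ is decreasing,
\[
\prod_{i=k+1}^t \frac{q_i}{q_i - 1} \le \prod_{i=k+1}^t \frac{p_i}{p_i - 1} \sim \frac{\log p_t}{\log p_k}
\]
by Mertens' third theorem. Fix $\eta \in (0, \epsilon)$ and take $k = \pi(p_t^{1 - \eta})$. The Prime Number Theorem then gives $k/t = O(p_t^{-\eta}) \to 0$ and $\log p_k = (1-\eta)\log p_t + O(1)$, so the tail product is at most $1/(1-\eta) + o(1)$ as $t \to \infty$.

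Since $k/t \to 0$, the admissibility constraint $k < \bigl(1 - 2/\log_{2e}(t)\bigr)t$ holds for all large $t$. Moreover $(1-\epsilon)/(1-\eta) < 1$, so for $t$ sufficiently large
\[
(1-\epsilon) \prod_{i=k+1}^t \frac{q_i}{q_i - 1} \le \frac{1-\epsilon}{1-\eta} + o(1) < 1 - \frac{k}{t},
\]
verifying the reduced inequality. Theorem \ref{asymptotic-thm} then yields
\[
\gamma(X_n) \ge (t - k) \prod_{i=1}^k \frac{q_i}{q_i - 1} \ge (1 - \epsilon) t \prod_{i=1}^t \frac{q_i}{q_i - 1},
\]
as required. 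The only nontrivial ingredient is the uniform tail bound via Mertens; everything else is bookkeeping. The underlying conceptual point is that $\prod_i q_i/(q_i - 1)$ grows only doubly logarithmically, so its value is insensitive to trimming off a vanishing initial segment of primes, which lets us absorb the loss $(t-k)$ from Theorem \ref{asymptotic-thm} into the error $\epsilon$.
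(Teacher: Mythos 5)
Your proof is correct and follows essentially the same route as the paper: both apply Theorem \ref{asymptotic-thm} with a suitable cutoff $k$ and then show that the tail product $\prod_{i=k+1}^t q_i/(q_i-1)$ costs only a factor absorbed into $\epsilon$. The only difference is bookkeeping: the paper takes $k=\lfloor \delta t\rfloor$ and uses a Prime Number Theorem lower bound on $q_k$ to make the tail product $1+o(1)$ (so the loss sits in $t-k$), whereas you take $k=\pi(p_t^{1-\eta})=o(t)$ and use Mertens' third theorem to bound the tail product by $1/(1-\eta)+o(1)$ (so the loss sits there instead); both choices work.
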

\begin{proof}
Take $0<\delta<\epsilon$ and let $k = \lfloor \delta t\rfloor $. Then applying Theorem \ref{asymptotic-thm} on $G=X_n \cong \prod_{i=1}^t K_{q_i}$, we see that \begin{align*} \gamma(X_n) & \ge (1-\delta)t\cdot  \prod_{i=1}^k \frac{q_i}{q_i-1} \\ & \ge (1-\delta)t \cdot \prod_{i=k+1}^t \left(1-\frac{1}{q_i} \right) \prod_{i=1}^t \frac{q_i}{q_i-1}\\ &\ge (1-\delta)t \cdot \prod_{i=k+1}^t \left(1- \frac{1}{\delta t\log(\delta t)} \right) \prod_{i=1}^t \frac{q_i}{q_i-1} \\ & \ge (1-\delta)t \cdot  \left(1-\frac{1}{\delta t\log(\delta t)} \right)^{(1-\delta)t} \prod_{i=1}^t \frac{q_i}{q_i-1}\\ & \ge (1-\epsilon)t \cdot  \prod_{i=1}^t \frac{q_i}{q_i-1}.\end{align*} Note that the third inequality is due to the fact that for $k+1 \le i \le t$, we know $q_i \ge q_k \ge \delta t\log( \delta t)$ for sufficiently large $t$, by the Prime Number Theorem (see e.g., \cite{IK}). The final inequality holds for sufficiently large $t$ due to the fact that \[\lim_{t \rightarrow \infty} \left(1-\frac{1}{\delta t\log(\delta t)} \right)^{(1-\delta)t}=1,\] which is straightforward to confirm. 

\end{proof}

Note that one can obtain a naive lower bound on $\gamma(X_n)$, where $n=q_1q_2 \cdots q_t$ is a product of distinct primes, by arguing as follows. Every vertex in $X_n$ has degree $\prod_{i=1}^t (q_i-1)$, so for any dominating set $D$ of size $\gamma(G)$, we know that \[\prod_{i=1}^t q_i=|N[D]| \le  \gamma(G)(1+\prod_{i=1}^t(q_i-1)). \] Hence we get that asymptotically, $\gamma(G) \gtrsim \prod_{i=1}^t \frac{q_i}{q_i-1}$. We remark that the lower bound obtained in Corollary \ref{asymptotic-lwb} is significantly sharper than both the naive lower bound and the lower bound in Theorem \ref{t+1+-lwb}. 

Probabilistic arguments allow one to obtain upper bounds on $\gamma(G)$ (see, e.g., \cite{AS}). We state the result as a proposition here for convenience. 

\begin{prop}[\cite{AS}, Theorem 1.2.2] \label{prob-upperbound}
Let $G=(V,E)$ be a graph with minimum degree $\delta$ and $|V|=n$. Then \[\gamma(G) \le \frac{n(1+\log(1 +\delta))}{1+\delta}.\]
\end{prop}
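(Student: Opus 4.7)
The plan is to apply the probabilistic (alteration) method in its classical form. I would first pick a random subset $X \subseteq V$ by including each vertex independently with probability $p \in [0,1]$, where $p$ will be optimized at the end. The set $X$ need not dominate $G$ on its own, so I would then let $Y := V \setminus N[X]$ be the set of vertices that are neither in $X$ nor adjacent to any vertex of $X$, and consider $D := X \cup Y$. By construction every vertex of $G$ either lies in $X$, is dominated by $X$ via an edge, or is added to $Y$, so $D$ is a dominating set of $G$.

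Next I would estimate $\mathbb{E}[|D|]$ by linearity of expectation. Clearly $\mathbb{E}[|X|] = pn$. For a fixed vertex $v$, the event $v \in Y$ coincides with the event that none of the $\deg(v) + 1$ vertices of $N[v]$ is chosen for $X$; since $\deg(v) \ge \delta$ and $1-p \le e^{-p}$, I obtain
\[
\Pr[v \in Y] = (1-p)^{\deg(v)+1} \le (1-p)^{\delta+1} \le e^{-p(\delta+1)}.
\]
Summing over $v$ yields $\mathbb{E}[|Y|] \le n\, e^{-p(\delta+1)}$, and therefore
\[
\mathbb{E}[|D|] \le pn + n\, e^{-p(\delta+1)}.
\]

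Finally I would optimize the right-hand side over $p \in [0,1]$. Differentiating in $p$ and setting the derivative equal to zero gives the stationary point $p^{*} = \frac{\log(1+\delta)}{1+\delta}$, which lies in $[0,1]$ because $\log(1+\delta) \le 1+\delta$. Plugging $p^{*}$ back in yields
\[
\mathbb{E}[|D|] \le \frac{n\log(1+\delta)}{1+\delta} + \frac{n}{1+\delta} = \frac{n\bigl(1+\log(1+\delta)\bigr)}{1+\delta}.
\]
Since the expectation of $|D|$ is bounded by this quantity, some specific outcome of the random choice must produce a dominating set $D$ satisfying $|D| \le \frac{n(1+\log(1+\delta))}{1+\delta}$, which gives the desired bound on $\gamma(G)$.

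I do not anticipate a genuine obstacle: this is a textbook instance of the alteration method, and the only delicate step is the optimization in $p$, which is elementary calculus. The one conceptual point worth flagging is that the bound on $\Pr[v \in Y]$ uses the global minimum degree $\delta$ rather than $\deg(v)$; this costs nothing because the function $x \mapsto (1-p)^{x+1}$ is decreasing on $[0,\infty)$, so replacing $\deg(v)$ with $\delta$ can only increase the estimate.
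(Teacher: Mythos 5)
Your argument is correct and is precisely the standard alteration-method proof of this bound; the paper does not prove the proposition itself but cites it from Alon--Spencer, where Theorem 1.2.2 is established by exactly this random-subset-plus-repair argument with the same optimization $p^{*} = \frac{\log(1+\delta)}{1+\delta}$. No gaps to report.
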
In particular, when $G = \prod_{i=1}^t K_{n_i}$, this proposition yields the following upper bound on $\gamma(G)$: \[\gamma(G) \le (1 + \sum_{i=1}^t \log(n_i)) \prod_{i=1}^t \left(1+\frac{1}{n_i-1}\right).\]
Combining this upper bound with the lower bound from Corollary \ref{asymptotic-lwb} allows us to obtain asymptotic estimates on $\gamma(X_n)$ where $n=p_1p_2 \cdots p_t$ is the product of the first $t$ primes. This is of particular interest due to the fact that  $\gamma(X_n)$ and $g(n)$ are closely related (a relationship that we shall consider in Section \ref{jacobsthal}), as well as the fact that $g(n)$ has been very well studied when $n=p_1p_2 \cdots p_t$ is the product of the first $t$ primes (see e.g., \cite{I} and references therein). 
\begin{cor}
Let $n=p_1p_2 \cdots p_t$ where $p_1,p_2, \dots, p_t$ are the first $t$ primes. There exist absolute constants $C_1,C_2>0$ such that for all sufficiently large $t$, \[ C_1 t \log t \le \gamma(X_n) \le C_2 t \log ^2 t .\]
\end{cor}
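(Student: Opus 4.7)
The plan is to combine the two bounds that immediately precede the corollary, namely Corollary \ref{asymptotic-lwb} for the lower bound and Proposition \ref{prob-upperbound} (specialised to the stated formula for $G = \prod K_{n_i}$) for the upper bound, and then translate everything into the language of prime-counting asymptotics via Mertens' third theorem and the Prime Number Theorem.

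For the lower bound, I would fix any $\epsilon \in (0,1)$ and apply Corollary \ref{asymptotic-lwb} with this $\epsilon$ to the graph $X_n \cong \prod_{i=1}^t K_{p_i}$, obtaining
\[
\gamma(X_n) \;\ge\; (1-\epsilon)\, t \prod_{i=1}^t \frac{p_i}{p_i-1}
\]
for all sufficiently large $t$. By Mertens' third theorem $\prod_{p \le x} \frac{p}{p-1} \sim e^{\gamma} \log x$, and by the Prime Number Theorem $p_t \sim t \log t$. Substituting $x = p_t$ gives $\prod_{i=1}^t \frac{p_i}{p_i-1} \sim e^{\gamma} \log(t \log t) \sim e^{\gamma} \log t$, so the right-hand side is asymptotically at least $(1-\epsilon) e^{\gamma} t \log t$. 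Any constant $C_1 < (1-\epsilon) e^{\gamma}$ then works for all sufficiently large $t$.

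For the upper bound, I would apply the specialisation of Proposition \ref{prob-upperbound} stated in the excerpt to $G = X_n$, giving
\[
\gamma(X_n) \;\le\; \Bigl(1 + \sum_{i=1}^t \log p_i \Bigr) \prod_{i=1}^t \Bigl(1 + \frac{1}{p_i-1}\Bigr).
\]
The first factor is $1 + \theta(p_t)$, and the Prime Number Theorem (in the form $\theta(x) \sim x$) together with $p_t \sim t \log t$ gives $1 + \theta(p_t) \sim t \log t$. The second factor equals $\prod_{i=1}^t \frac{p_i}{p_i-1}$, which by the same Mertens asymptotic used above is $\sim e^{\gamma} \log t$. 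Multiplying yields $\gamma(X_n) \le (1+o(1)) e^{\gamma} t \log^2 t$, so any $C_2 > e^{\gamma}$ suffices for all sufficiently large $t$.

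The proof is essentially a bookkeeping exercise: the real content is carried by Corollary \ref{asymptotic-lwb} and Proposition \ref{prob-upperbound}, and the only work left is to invoke the standard analytic number theory inputs. The mildest subtlety — and the step worth naming explicitly — is the PNT estimate $p_t \sim t \log t$ needed to convert the product $\prod_{i=1}^t \frac{p_i}{p_i-1}$ (indexed by the first $t$ primes) into the Mertens form $\prod_{p \le x} \frac{p}{p-1}$ with $x = p_t$; once this conversion is made, both bounds fall out immediately with the claimed order $t \log t$ and $t \log^2 t$ respectively.
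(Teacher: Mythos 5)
Your proposal is correct and follows essentially the same route as the paper: Corollary \ref{asymptotic-lwb} plus Mertens' third theorem for the lower bound, and the specialisation of Proposition \ref{prob-upperbound} plus a bound on $\sum_{i\le t}\log p_i$ plus Mertens for the upper bound. The only (inessential) difference is that the paper uses the cruder Chebyshev-type estimate $\log(p_1\cdots p_t)<2p_t<4t\log t$ where you invoke the sharper PNT asymptotics $\theta(x)\sim x$ and $p_t\sim t\log t$.
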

\begin{proof}
The upper bound follows from Proposition \ref{prob-upperbound}, the known upper bound (see e.g., \cite{IK}) \[\log(p_1p_2 \cdots p_t)< 2p_t < 4t\log(t),\] and Mertens' Third Theorem (again, see e.g., \cite{IK}). The lower bound follows from Corollary \ref{asymptotic-lwb} and Mertens' Third Theorem. 
\end{proof}
\section{Classifying when $\gamma(G) = t+2$} \label{edge-case}
In \cite{DI}, Defant and Iyer partially classify the graphs $G = \prod_{i=1}^t K_{n_i}$ such that $\gamma(G)=t+2$ with the following theorem: 
\begin{thm}[\cite{DI}, Theorem 2.8] \label{partial-class}
Let $G = \prod_{i=1}^t K_{n_i}$ where $2 \le n_1 \le n_2 \le \cdots \le n_t$, $t \ge 4$, and $n_2 \ge 3$. If $n_3 \ge t+1$, then $\gamma(G) = t+2$ if and only if one of the following holds: \begin{enumerate}
    \item $n_1=t$
    \item $\frac{t+1}{2} < n_1 \le t-1$ and $t+1 <n_2$.
\end{enumerate}
\end{thm}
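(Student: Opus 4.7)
The proof splits into sufficiency and necessity. For \textbf{sufficiency}, in both cases (1) and (2), the hypothesis $n_1 \in ((t+1)/2, t]$ combines with Theorem~\ref{t+1+-lwb} to give $\lfloor (t-1)/(n_1-1)\rfloor = 1$, hence $\gamma(G) \ge t+2$. For the matching upper bound, the plan is to exhibit an explicit dominating set of size $t+2$: place $t$ vertices in a ``generalized diagonal'' pattern (exploiting the slack $n_3 \ge t+1$ in case (1) and $n_2 \ge t+2$ in case (2)) and adjoin two correction vertices chosen to cover the residually undominated points. Verifying domination reduces to a direct finite case analysis on coordinate patterns.

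For \textbf{necessity}, assume $\gamma(G) = t+2$ and $n_3 \ge t+1$. Theorem~\ref{t+1+-lwb} forces $\lfloor (t-1)/(n_1-1)\rfloor \le 1$, i.e., $n_1 > (t+1)/2$, while combining Theorems~\ref{t+1-lwb} and~\ref{t+1+-lwb} to classify when $\gamma = t+1$ excludes $n_1 \ge t+1$. Hence $(t+1)/2 < n_1 \le t$. If $n_1 = t$ we are in case (1), so assume $(t+1)/2 < n_1 \le t-1$; the remaining task is to show $n_2 \ge t+2$. Suppose for contradiction that $n_2 \le t+1$ and fix a minimum dominating set $D$ with $|D|=t+2$. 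The strategy is to apply Lemma~\ref{tech} with $k=2$, $(i_1,i_2)=(1,2)$: if one produces disjoint fibers $E_1,E_2 \subseteq D$ (with elements of $E_j$ sharing their $j$th coordinate) satisfying $|E_1|+|E_2| \ge 5 = \gamma(G)-t+k+1$, the lemma forces $n_i \in \{2,3\}$ for every $i \ge 3$, contradicting $n_3 \ge t+1 \ge 5$.

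The \textbf{main obstacle} is guaranteeing $|E_1|+|E_2| \ge 5$. Pigeonhole alone is insufficient: with $n_1 \le t-1$, $n_2 \le t+1$, and $|D|=t+2$, one can only guarantee size-$2$ fibers in each of coords $1$ and $2$, giving $|E_1|+|E_2| \ge 4$ in the tight case $n_1=t-1$, $n_2=t+1$. To close the gap, my plan is twofold. First, exploit the minimality of $D$: each $d \in D$ has a ``private neighbor'' dominated only by $d$, which constrains the fiber structure enough that in many configurations a size-$\ge 3$ fiber must appear in coord $1$ or $2$. Second, when the first approach stalls, pass to $k=3$: if $n_3=t+1$, pigeonhole produces a size-$2$ fiber in coord $3$, and combining this with two disjoint size-$2$ fibers in coords $1,2$ (available because coord $1$ admits at least $t+2-n_1 \ge 3$ such fibers, so a short disjointness argument succeeds) yields $|E_1|+|E_2|+|E_3| \ge 6 = \gamma(G)-t+k+1$; then Lemma~\ref{tech} forces $n_i \in \{2,3\}$ for $i \ge 4$, contradicting $n_4 \ge n_3 \ge t+1$. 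The remaining subcase $n_3 \ge t+2$ is the most delicate, since coord $3$ need not contain any size-$2$ fiber; here a finer structural argument based on the private neighbors of elements of $D$ at coords $1$ and $2$ is needed to force the contradiction, likely by showing that if every coord-$1$ and coord-$2$ fiber has size $\le 2$ then $D$ fails to dominate a specifically constructed vertex.
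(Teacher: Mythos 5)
First, note that the paper does not prove Theorem~\ref{partial-class}: it is quoted verbatim from Defant--Iyer \cite{DI} (their Theorem~2.8), so there is no in-paper proof to compare against. Judged on its own terms, your proposal gets the routine parts right --- the lower bound $\gamma(G)\ge t+2$ from Theorem~\ref{t+1+-lwb}, and the reduction of necessity to showing $n_2\ge t+2$ when $(t+1)/2<n_1\le t-1$ --- but it has two genuine gaps. The upper-bound construction is only gestured at (``generalized diagonal plus two correction vertices''); when $n_1$ is as small as roughly $t/2$ and $n_3$ may equal $t+1$, first and third coordinates must be reused, and the actual construction and its verification are a substantive piece of work that you have not supplied.

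The more serious problem is in the necessity direction, where you correctly identify the obstacle ($|E_1|+|E_2|\ge 5$ is not available by pigeonhole) but your proposed repairs do not work, and the hardest subcase is explicitly left open. The missing ingredient is Lemma~\ref{pigeonhole-lemma} (also from \cite{DI}, and used in this paper's proof of Theorem~\ref{final-case}): if $\gamma(G)=t+2$ and $|D|=t+2$, then every fiber $F_v(\ell)$ has size at most $2$, and any two size-$2$ fibers in \emph{distinct} coordinates must intersect. This kills your $k=3$ plan outright: the ``short disjointness argument'' for finding pairwise disjoint size-$2$ fibers in coordinates $1,2,3$ cannot succeed, because size-$2$ fibers in different coordinates are never disjoint. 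But the same lemma closes the argument immediately in the opposite way: since $n_1\le t-1$ and all fibers have size at most $2$, coordinate $1$ contains at least $t+2-n_1\ge 3$ pairwise disjoint size-$2$ fibers; if $n_2\le t+1<|D|$, coordinate $2$ contains a size-$2$ fiber, which by the lemma must meet all three of those pairwise disjoint $2$-element sets --- impossible for a $2$-element set. Hence $n_2\ge t+2$, with no need for private-neighbor arguments or Lemma~\ref{tech} at this stage. I recommend restructuring the necessity proof around Lemma~\ref{pigeonhole-lemma} and writing out the explicit dominating sets for the sufficiency direction.
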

Furthermore, an immediate consequence of their methods is the following proposition, which we shall also require: 
\begin{prop}[\cite{DI}] \label{reduction}
Let $G = \prod_{i=1}^t K_{n_i}$ where $2 \le n_1 \le n_2 \le \cdots \le n_t$, $t \ge 4$ and $n_2 \ge 3$. Furthermore, suppose that $n_3 \le t$ and $\gamma(G) = t+2$. Then it must be the case that $n_1 = n_2 = n_3 =t$. 
\end{prop}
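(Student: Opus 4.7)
The plan is to adapt the methods Defant and Iyer use in their proof of Theorem \ref{partial-class}, exploiting the fact that when $n_3 \le t$ we have three coordinates of size at most $t$ available for use in Lemma \ref{tech}. Suppose, for contradiction, that $\gamma(G) = t+2$, $n_3 \le t$, but $n_1 \le t-1$. Since $n_1 \le n_2 \le n_3 \le t$, the conclusion $n_1 = n_2 = n_3 = t$ is equivalent to $n_1 \ge t$, so it suffices to reach a contradiction. Applying Theorem \ref{t+1+-lwb} to $\gamma(G) = t+2$ yields $\lfloor (t-1)/(n_1-1) \rfloor \le 1$, whence $n_1 > (t+1)/2$; in particular $n_1 \ge \lceil (t+3)/2 \rceil \ge 3$ for $t \ge 4$.

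Let $D$ be a minimum dominating set of $G$, so $|D| = t+2$. First I apply greedy pigeonhole on coordinates $1, 2, 3$: iteratively pick $v_j \in V(K_{n_j})$ maximizing $|E_j|$, where $E_j := \{d \in D \setminus (E_1 \cup \cdots \cup E_{j-1}) : [d]_j = v_j\}$. Since $n_j \le t$ for $j = 1, 2, 3$ and $n_1 \le t-1$, elementary estimates give $|E_1| \ge 2$ and $|E_j| \ge 1$ for $j = 2, 3$. Then I invoke Lemma \ref{tech} with $k = 3$ and $i_j = j$, which yields $|E_1 \cup E_2 \cup E_3| \le 7$. Since I chose $i_1 = 1$, the extremal case $|E| = 7$ is immediately ruled out (that case of Lemma \ref{tech} requires $1 \notin \{i_1, i_2, i_3\}$). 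In the case $|E| = 6$, Lemma \ref{tech} forces $n_i \in \{2, 3\}$ for all $i \ge 4$; combined with monotonicity and $n_1 \ge 3$, this yields $G = K_3^t$, which for $t \ge 5$ directly contradicts $n_1 \ge \lceil (t+3)/2 \rceil \ge 4$, and for $t = 4$ is ruled out by a direct verification that $\gamma(K_3^4) > 6$.

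The hard part will be the regime $|E| \le 5$, in which Lemma \ref{tech} imposes no extra structure and the argument above does not immediately close. Here the plan is to mimic the refinement strategy of Defant and Iyer: on each of coordinates $1, 2, 3$ the \emph{excess} $\sum_v (a_{v,i} - 1) = (t+2) - n_i \ge 2$, where $a_{v, i} := |\{d \in D : [d]_i = v\}|$, provides structural redundancy that lets us either select the $E_j$'s more cleverly so that $|E_2|, |E_3| \ge 2$ rather than just $1$, or identify a single $d \in D$ sharing coordinates with several other elements of $D$ across multiple axes. Feeding this refined structure back into Lemma \ref{tech}---potentially with a different choice of $(E_j, i_j)$ or iterated applications---should then produce the final contradiction.
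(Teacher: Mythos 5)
Your argument is incomplete in exactly the regime where all the content lies. After the greedy pigeonhole you can easily land at $|E_1|=2$, $|E_2|=|E_3|=1$, i.e.\ $|E|=4\le 5$, and in that regime Lemma~\ref{tech} gives no information at all; you acknowledge this and offer only a plan (``should then produce the final contradiction''), so the proof does not close. There are also two smaller blemishes: the bound from Theorem~\ref{t+1+-lwb} is $n_1\ge\lfloor(t+3)/2\rfloor$, not $\lceil(t+3)/2\rceil$ (for even $t$ your stated bound is too strong, though the consequence $n_1\ge 4$ for $t\ge 5$ survives), and the appeal to ``a direct verification that $\gamma(K_3^4)>6$'' is an unproved claim rather than an argument.

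The missing idea is Lemma~\ref{pigeonhole-lemma}, which is precisely the ``refinement'' your plan gestures at and which makes the whole statement a few lines (the paper itself gives no proof, attributing it to Defant--Iyer's methods, but it runs this argument explicitly in the $n_4\le t+1$ half of Theorem~\ref{final-case}). Suppose $\gamma(G)=t+2$, let $D$ be a minimum dominating set, and suppose for contradiction $n_1\le t-1$. Since $|F_v(\ell)|\le 2$ for every $v,\ell$, counting $\sum_v|F_v(1)|=t+2$ over the $n_1$ values shows at least $t+2-n_1\ge 3$ vertices $v$ of $K_{n_1}$ satisfy $|F_v(1)|=2$; these give three \emph{pairwise disjoint} $2$-element subsets of $D$. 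The same count on coordinate $2$ (using $n_2\le n_3\le t$) produces some $w$ with $|F_w(2)|=2$, and Lemma~\ref{pigeonhole-lemma} forces this $2$-element set to meet all three disjoint pairs from coordinate $1$ --- impossible. Hence $n_1=t$ and therefore $n_1=n_2=n_3=t$. Your excess computation $\sum_v(a_{v,i}-1)=(t+2)-n_i$ is exactly the right quantity (note it is $\ge 3$, not just $\ge 2$, on coordinate $1$); you simply needed to combine it with the intersection condition of Lemma~\ref{pigeonhole-lemma} rather than feed it back into Lemma~\ref{tech}.
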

Finally, we state an important lemma from \cite{DI} that we shall use frequently. 
\begin{lem}[\cite{DI}, Lemma 2.7] \label{pigeonhole-lemma}
Let $G=\prod_{i=1}^t K_{n_i}$, where $2 \le n_1 \le n_2 \le \cdots \le n_t$, $t \ge 4$ and $n_2 \ge 3$. Suppose $\gamma(G) = t+2$, and let $D$ be a dominating set of $G$ with $|D|=t+2$. For every $\ell \in \{1, 2, \dots, t\}$, and every vertex $v$ of $K_{n_{\ell}}$, let $F_{v}(\ell) = \{z \in D : [z]_{\ell} = v\}$. We have $|F_v(\ell)| \le 2$ for every choice of $\ell$ and $v$. If $\ell, \ell' \in \{1, \dots, t\}$ are distinct and there exist vertices $v$ of $K_{n_{\ell}}$ and $v'$ of $K_{n_{\ell'}}$ such that $|F_v(\ell)| = |F_v'(\ell')| = 2$, then $F_{v}(\ell) \cap F_{v'}(\ell') \neq \emptyset$.
\end{lem}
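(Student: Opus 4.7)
My plan is to prove both claims by contradiction using Lemma~\ref{tech} applied to carefully constructed families of disjoint coordinate-constant subsets of $D$, augmented by pigeonhole. A useful preliminary, valid once Part~1 is in hand, is that the identity $\sum_u |F_u(i)| = |D| = t+2$ combined with $|F_u(i)| \le 2$ forces $n_i \ge \lceil (t+2)/2 \rceil \ge 3$ for every $i$; in particular $n_1 \ge 3$, which rules out the equality case of Lemma~\ref{tech} (which mandates $n_1 = 2$) and effectively sharpens the bound to $|E| \le \gamma(G)-t+k+1 = k+3$, with equality forcing $n_i = 3$ for each $i \notin \{i_1,\dots,i_k\}$.

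For Part~1, I first apply Lemma~\ref{tech} with $k=1$, $E_1 = F_v(\ell)$, $i_1 = \ell$, obtaining $|F_v(\ell)| \le 5$. The case $|F_v(\ell)| = 5$ matches the equality condition and forces $E_1 = D$, hence $|D|=5$, i.e., $t = 3$, contradicting $t \ge 4$. For $|F_v(\ell)| = 4$, the near-equality condition forces $n_i \in \{2, 3\}$ for every $i \ne \ell$, and a pigeonhole on the remaining $t - 2$ elements of $D$ on any such coordinate $\ell'$ (where $n_{\ell'} \le 3$) produces a doubled bucket disjoint from $F_v(\ell)$; applying Lemma~\ref{tech} with $k=2$ then yields $|E| \ge 6 = k + 4$, triggering the equality case, which forces $|D| = 6$ hence $t = 4$, and the remaining structural constraints ($n_1 = 2$ and a single excluded $h$ with $n_h = 3$) can be shown incompatible with the specific coordinates used together with $n_2 \ge 3$. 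For $|F_v(\ell)| = 3$ the argument is similar but requires locating two additional disjoint size-$\ge 2$ subsets on distinct fresh coordinates, exploiting that the $t-1$ elements of $D \setminus F_v(\ell)$ must concentrate onto few values in each coord once Lemma~\ref{tech}'s weaker conclusion is iterated.

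For Part~2, I assume $F_v(\ell) = \{z_1, z_2\}$ and $F_{v'}(\ell') = \{w_1, w_2\}$ are disjoint with $\ell \ne \ell'$. The direct $k = 2$ application of Lemma~\ref{tech} only gives $|E| = 4$, below the threshold. To augment, I use Part~1: on any fresh coordinate $\ell''$, the identity $\sum_u |F_u(\ell'')| = t + 2$ together with $|F_u(\ell'')| \le 2$ forces at least $\lceil (t+2)/2 \rceil$ doubled buckets, at least one of which is disjoint from the four elements of $E_1 \cup E_2$. This yields $E_3$ of size $2$; with $k = 3$, $|E| = 6 = k + 3$, the near-equality case forces $n_i = 3$ for every $i \notin \{\ell, \ell', \ell''\}$. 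The constraint $|D| = t + 2 \le 3 \cdot 2 = 6$ on any such coordinate then forces $t = 4$ and every such coord partitions $D$ into three doubled buckets; one further augmentation using one of these partitions yields $k = 4$ and $|E| = 8 > 7 = k + 3$, the final contradiction.

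The main obstacle is guaranteeing, at each augmentation step, the existence of a disjoint coordinate-constant subset of size $\ge 2$ on a fresh coordinate. Pigeonhole gives doubled buckets somewhere, but disjointness from the prior $E_j$'s and freshness of coordinates must be engineered using the Part~1 size bound together with the structural rigidification produced by each near-equality application of Lemma~\ref{tech} (which collapses the remaining $n_i$'s to exactly $3$, sharpening the subsequent pigeonhole). The hypothesis $n_2 \ge 3$ plays a crucial role: combined with Part~1's derived inequality $n_1 \ge 3$, it excludes the equality case of Lemma~\ref{tech} outright, which is what upgrades the raw bound $|E| \le k+4$ to the sharper $|E| \le k+3$ used throughout Part~2.
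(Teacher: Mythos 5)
The paper does not actually prove this lemma: it is quoted verbatim from Defant--Iyer as \cite{DI}, Lemma 2.7, so there is no in-paper argument to compare yours against, and your proposal must stand on its own. It does not. The decisive error is the pigeonhole step powering Part~2: from $\sum_u |F_u(\ell'')| = t+2$ and $|F_u(\ell'')| \le 2$ you infer ``at least $\lceil (t+2)/2\rceil$ doubled buckets'' on a fresh coordinate $\ell''$, but what follows is only that at least $\lceil (t+2)/2\rceil$ buckets are \emph{nonempty}; the number of \emph{doubled} buckets is bounded below by $t+2-n_{\ell''}$, which is $\le 0$ as soon as $n_{\ell''}\ge t+2$. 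The hypotheses impose no upper bound on the $n_i$ (for instance $n_1=t$ and $n_2=\cdots=n_t$ arbitrarily large is consistent with $\gamma(G)=t+2$ by Theorem~\ref{partial-class}), so the set $E_3$ you need may simply not exist and the augmentation collapses. Even granting it, your closing step requires four disjoint subsets of $D$ with $|E|=8$ immediately after you have forced $|D|=6$, which is impossible. The same over-optimistic pigeonhole undermines Part~1: in the $|F_v(\ell)|=4$ case the $t-2$ elements of $D\setminus F_v(\ell)$ spread over a coordinate with $n_{\ell'}=3$ need not collide when $t\in\{4,5\}$, and in the $|F_v(\ell)|=3$ case Lemma~\ref{tech} with $k=1$ gives no structural information at all (the threshold $|E|\ge k+3$ is not met), so you have no control on the remaining $n_i$ and no mechanism for producing the two extra doubled buckets you invoke. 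Your preliminary observation that Part~1 forces $n_i\ge\lceil(t+2)/2\rceil\ge 3$ and hence excludes the equality case of Lemma~\ref{tech} is correct, but it is circular inside the proof of Part~1 and does not repair Part~2.

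For what it is worth, Part~2 has a short direct proof that bypasses Lemma~\ref{tech} entirely: if $F_v(\ell)=\{a,b\}$ and $F_{v'}(\ell')=\{c,d\}$ were disjoint, define $x$ by $[x]_\ell=v$, $[x]_{\ell'}=v'$, and, via a bijection between the $t-2$ remaining coordinates and the $t-2$ elements of $D\setminus\{a,b,c,d\}$, set $[x]_i=[e]_i$ for the element $e$ assigned to coordinate $i$; then $x$ is adjacent to no element of $D$, hence $x\in D$, hence $x\in F_v(\ell)\cap F_{v'}(\ell')$, a contradiction. Part~1 genuinely needs the counting-over-bijections technique of \cite{DI} (the same template as the proof of Theorem~\ref{asymptotic-thm} in this paper): one shows that if $|F_v(\ell)|\ge 3$ then many bijections produce undominated-unless-in-$D$ vertices all forced into $F_v(\ell)$, and a collision between two such bijections manufactures the disjoint agreeing pair needed to trigger Lemma~\ref{tech}. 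That is the missing engine your sketch gestures at but never supplies.
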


The notation introduced in Lemma \ref{pigeonhole-lemma} shall be used throughout this section. We are now ready to classify all $G = \prod_{i=1}^t K_{n_i}$ with $n_1=n_2=n_3=t$ and $\gamma(G) = t+2$.
\begin{thm} \label{final-case}
Let $G = \prod_{i=1}^t K_{n_i}$, where $2 \le n_1 \le n_2 \le \cdots \le n_t$, $t \ge 4$, and $n_1 = n_2 = n_3 = t$. Then $\gamma(G) = t+2$ if and only if $n_4 \ge t+2$.
\end{thm}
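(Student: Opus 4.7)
The theorem has two directions, and both rely on the structural analysis already set up in the chapter. I will organize the proof as follows.

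For the \emph{if} direction ($n_4 \ge t+2$), the plan is to exhibit an explicit dominating set of size $t+2$. Starting from four ``structured'' vertices
\[d_{11}=(1,1,1,1,\ldots),\ d_{12}=(1,2,2,2,\ldots),\ d_{21}=(2,1,2,3,\ldots),\ d_{22}=(2,2,1,4,\ldots)\]
(the coordinate-3 pattern is chosen so that the doubles in coordinates 1, 2, 3 respect the intersection rule of Lemma \ref{pigeonhole-lemma}) together with $t-2$ ``diagonal'' vertices $d_j=(j,j,j,j+2,\ldots)$ for $j=3,\ldots,t$, the extra room $n_4\ge t+2$ is exactly what is needed to make all $t+2$ fourth coordinates distinct. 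The values on coordinates $5,\ldots,t$ can be filled in with a paired pattern such as $[d_j]_i=\lceil j/2\rceil$. Verification proceeds by casework on $(v_1,v_2,v_3)\in\{1,\ldots,t\}^3$: if some $j\in\{3,\ldots,t\}$ is missing from $\{v_1,v_2,v_3\}$ then $d_j$ handles the first three coordinates and one shows enough slack remains to cover the rest, while the residual cases (which occur only when $t\le 5$) can be checked directly, as in the verification of the $t=4$ example.

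For the \emph{only if} direction, assume $n_4\in\{t,t+1\}$. Theorems \ref{t+1-lwb} and \ref{t+1+-lwb} together with $n_1=t$ already give $\gamma(G)\ge t+2$, so the task is to rule out equality. Assume toward contradiction that $D$ is a dominating set of size $t+2$. For each $\ell\in\{1,2,3\}$, packing $t+2$ elements into $t$ bins of capacity at most $2$ (Lemma \ref{pigeonhole-lemma}) forces $|A_\ell|\ge 2$ where $A_\ell=\{v:|F_v(\ell)|=2\}$, and applying the intersection part of Lemma \ref{pigeonhole-lemma} to any fixed $v_1\in A_1$ yields $|A_2|,|A_3|\le 2$. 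Thus $|A_\ell|=2$ for $\ell=1,2,3$; write $A_\ell=\{u_\ell,w_\ell\}$ and label $F_{u_1}(1)=\{d_{11},d_{12}\}$, $F_{w_1}(1)=\{d_{21},d_{22}\}$. A short chase through the intersection rule forces (after relabelling) $[d_{11}]_2=[d_{21}]_2=u_2$, $[d_{12}]_2=[d_{22}]_2=w_2$, and similarly on coordinate 3 up to two cases (``Case 1'' and ``Case 2'') depending on whether $([d_{21}]_3,[d_{22}]_3)$ equals $(u_3,w_3)$ or $(w_3,u_3)$. Incorporating coordinate 4 via the same pigeonhole and intersection arguments yields $|A_4|=2$ when $n_4=t$ and $|A_4|\in\{1,2\}$ when $n_4=t+1$, and in every surviving sub-scenario the intersection rule eliminates Case 2 and completely pins down the first four coordinates of $d_{11},d_{12},d_{21},d_{22}$. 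The remaining $t-2$ elements $d_3,\ldots,d_t$ are then forced to be ``singletons'' whose first four coordinates run bijectively over the complementary values. The contradiction is produced by exhibiting an explicit undominated vertex of the form $v=(v_1,u_2,w_3,v_4,\ldots)$ with $v_1$ in the singleton range of coordinate 1 and $v_4$ equal to the fourth coordinate of the unique singleton that could otherwise have dominated $v$; a direct check then confirms that every element of $D$ shares some coordinate with $v$ and that $v\notin D$.

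The main obstacle is this coordinate-4 case analysis. In particular, the sub-case $n_4=t+1$ with $|A_4|=1$ further splits depending on whether $[d_{11}]_4=[d_{12}]_4$ or not, and each of the resulting configurations demands its own tailored choice of witness vertex; the verification that the witness genuinely lies outside $D$ must also be redone in each sub-case. The analysis is tight in the sense that relaxing the hypothesis to $n_4\ge t+2$ breaks exactly the step in which the witness is forced, which is also the source of the additional flexibility that makes the explicit construction in the if direction possible.
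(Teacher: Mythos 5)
Your overall strategy matches the paper's (an explicit $(t+2)$-element dominating set for the ``if'' direction, pigeonhole plus Lemma \ref{pigeonhole-lemma} for the ``only if'' direction), but both halves have concrete problems. In the construction, the ``paired pattern'' $[d_j]_i=\lceil j/2\rceil$ on coordinates $5,\dots,t$ does not just complicate the verification, it breaks the construction: on each such coordinate an adversarial target vertex can agree with \emph{two} of the diagonal vertices, so coordinates $5,\dots,t$ alone can eliminate $2(t-4)$ of the $d_j$, which for $t\ge 8$ is all of them; the four structured vertices are then easily eliminated using the first three coordinates, producing an undominated vertex. Since $n_i\ge n_4\ge t+2$ for every $i\ge 4$, there is no reason to economize: the paper gives each of the $t+2$ vertices a \emph{distinct} value on every coordinate $i\ge 4$, so that restriction to coordinates $4,\dots,t$ rules out at most $t-3$ of them, at least five survive, and a short case analysis shows that any five of the first-three-coordinate patterns drawn from $\{(0,0,0),(0,1,1),(1,0,1),(1,1,0)\}$ together with the diagonals $(m,m,m)$ dominate $K_t\times K_t\times K_t$. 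Your phrase ``enough slack remains to cover the rest'' is exactly the step that fails.

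For the ``only if'' direction your setup through coordinate 3 is correct and agrees with the paper, but the elaborate coordinate-4 case analysis and the hunt for a witness vertex are both unnecessary and, as written, unexecuted (you describe sub-cases that ``demand their own tailored witness'' without producing any of them). The contradiction is immediate: the six sets $F_{x_1}(1),F_{x_2}(1),F_{x_3}(2),F_{x_4}(2),F_{x_5}(3),F_{x_6}(3)$ are forced to be precisely the six $2$-element subsets of $\{u,v,w,z\}$; since $n_4\le t+1<t+2=|D|$, pigeonhole gives some $x_7$ with $|F_{x_7}(4)|=2$, and Lemma \ref{pigeonhole-lemma} forces $F_{x_7}(4)$ to meet both $\{u,v\}$ and $\{w,z\}$, hence to be a $2$-subset of $\{u,v,w,z\}$ --- but then it is disjoint from its complement in $\{u,v,w,z\}$, which is one of the six, contradicting the intersection clause of the lemma. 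No sub-scenario survives to require a witness vertex, so the portion of your argument carrying the main burden is aimed at cases that do not exist, while the one-line closing step is missing.
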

\begin{proof}
We begin by assuming that $n_4 \ge t+2$, and we shall prove that $\gamma(G)=t+2$. To see this, first note that by Theorem \ref{t+1+-lwb}, we know that $\gamma(G) \ge t+2$, so it suffices to construct a dominating set $D$ of $G$ with $|D|=t+2$. The construction is as follows - let $D = \{d_0, d_1, \dots, d_{t+1}\}$ where we define the $d_i$ as such: \[d_0:=(0,0,0,0, \dots, 0), d_1:=(0, 1, 1, 1, \dots, 1), d_2:=(1, 0, 1, 2, \dots , 2),\] \[d_3:=(1,1,0,3, \dots, 3), d_4:=(2, 2, 2, 4, \dots, 4), d_5:=(3, 3, 3, 5, \dots, 5),\] \[\cdots, d_{t+1}:=(t-1, t-1, t-1, t+1, \dots, t+1).\] 
In order to confirm that this is indeed a dominating set of $G$, we first introduce some notation. For any vertex $v \in V(G)$ and set $S \subseteq \{1,2, \dots, t\}$, let $r_S(v)$ denote the restriction of $v$ onto $S$, namely the ordered tuple \[([x]_{i_1}, [x]_{i_2}, \dots, [x]_{i_{|S|}}),\] where $i_1<i_2< \cdots < i_{|S|} \in S$. This ordered tuple corresponds to a vertex in the graph $G_S:= \prod_{i \in S} K_{n_i}$. As such, we shall use notions of adjacency and domination between restrictions of vertices onto $S$. For instance, $r_S(v_1)$ is adjacent to $r_S(v_2)$ if and only if $[v_1]_{i_j} \neq [v_2]_{i_j}$ for all $i_j \in S$. Similarly, a set of vertices $X \subseteq V(G_S)$ dominates $r_S(v)$ if and only if $r_S(v) \in X$ or some vertex $u \in X$ is adjacent to $r_S(v)$. Finally, note that if we let $T = \{1,2, \dots, t\} \setminus S$, then $v_1, v_2 \in V(G)$ are adjacent if and only if $r_S(v_1), r_S(v_2)$ are adjacent and $r_{T}(v_1), r_{T}(v_2)$ are adjacent. \\
\\
Now suppose that some vertex $x \in V(G)$ is not dominated by $D$. Let $S_1 = \{4, \dots, t\}$ and $S_2 = \{1, 2, 3\}$. Since $|S_1| = t-3$, we see that there are at least five vertices $d_{k_1}, \dots, d_{k_5} \in D$ such that $r_{S_1}(d_{k_i})$ is adjacent to $r_{S_1}(x)$ for each $1 \le i \le 5$. Now note that it suffices to show that among any five vertices $d_{k_1}, \dots, d_{k_5} \in D$, there exists some $1 \le j \le 5$ such that $r_{S_2}(d_{k_j})$ is adjacent to $r_{S_2}(x)$. To see this, we perform casework on the number of the $k_i$ $(1 \le i \le 5)$ that are less than $4$. For convenience, we shall let $R$ denote the set $\{r_{S_2}(d_{k_1}), \dots, r_{S_2}(d_{k_5})\}$. \begin{itemize}
    \item \textit{Case 1: Exactly four of the $k_i$ are less than $4$.}\\
    In this case, note that we have \[R = \{(0,0,0), (0,1,1), (1,0,1), (1,1,0), (m,m,m)\}\] for some $2 \le m \le t-1$. Now in the proof of Theorem \ref{t+1-lwb}, Meki{\u s} shows that the vertices corresponding to $(0,0,0), (0,1,1), (1,0,1), $ and $(1,1,0)$ form a dominating set for the direct product of any three complete graphs, so since $r_{S_2}(x) \in V(K_t \times K_t \times K_t)$, we know that $r_{S_2}(x)$ is dominated by one of these four vertices and hence is adjacent to some element of $R$, as desired. 
    
    \item \textit{Case 2: Exactly three of the $k_i$ are less than $4$.}\\
    In this case, note that we have \[R \supseteq \{(p,p,p), (q,q,q)\}\] for some $2 \le p < q \le t-1$. Now suppose for the sake of contradiction that $r_{S_2}(x)$ is not adjacent to an element of $R$. Notice that any element of $R$ is adjacent to either $(p,p,p)$ or $(q,q,q)$, so $r_{S_2}(x) \not \in R$. In this case, we know that one of the coordinates of $r_{S_2}(x)$ must be equal to $p$ and one must be equal to $q$. We shall only consider the case where $[r_{S_2}(x)]_1=p$ and $[r_{S_2}(x)]_2 = q$, as the other cases follow similarly. Observe that $r_{S_2}(x)$ is adjacent to any element of $\{(0,0,0), (0,1,1), (1,0,1), (1,1,0)\}$ that differs from it on its third coordinate. But since three of the $k_i$ are less than $4$, we know that there is indeed some $k_i$ such that $r_{S_2}(k_i)$ is an element of $\{(0,0,0), (0,1,1), (1,0,1), (1,1,0)\}$ that differs from $r_{S_2}(x)$ on its third coordinate, yielding the desired contradiction. Hence $r_{S_2}(x)$ is indeed totally dominated by an element of $R$.
    
    \item \textit{Case 3: At most two of the $k_i$ are less than $4$.} \\
    In this case, note that we have \[R \supseteq \{(p,p,p), (q,q,q), (r,r,r)\}\] for some $2 \le p<q<r \le t-1$. Now again, suppose for the sake of contradiction that $r_{S_2}(x)$ is not adjacent an element of $R$. Since any element of $R$ is adjacent to either $(p,p,p)$ or $(q,q,q)$, $r_{S_2}(x) \not \in R$. Thus we know that one of the coordinates of $r_{S_2}(x)$ must be equal to $p$, one must be equal to $q$, and one must be equal to $r$. Hence $r_{S_2}(x)$ is adjacent to any element of $\{(0,0,0), (0,1,1), (1,0,1), (1,1,0)\}$. Furthermore, $r_{S_2}(x)$ is adjacent to any element of the form $(u,u,u)$ where $2 \le u \le t-1$ and $u\neq p,q,r$. Since $R$ must contain some element of one of these types, we see that indeed, $r_{S_2}(x)$ is adjacent to an element of $R$, as desired. 
\end{itemize}
This shows that $D$ is a total dominating set of $G$, implying that $\gamma(G) \le t+2$ so combining this with our lower bound of $\gamma(G) \ge t+2$, we see that $\gamma(G)=t+2$, as required. \\
\\
Now we assume that $n_4 \le t+1$ and prove that $\gamma(G)>t+2$. Suppose for the sake of contradiction that $\gamma(G) = t+2$, and let $D \subseteq V(G)$ be a dominating set of size $t+2$. We shall adopt the same notation used in Lemma \ref{pigeonhole-lemma}. Note that by the pigeonhole principle along with Lemma \ref{pigeonhole-lemma}, there must be two distinct vertices $x_1,x_2 \in K_t$ such that $|F_{x_1}(1)|=|F_{x_2}(1)|=2$. Let $F_{x_1}(1)=\{u, v\}$ and $F_{x_2}(1)=\{w, z\}$, where $u,v,w,z \in D$ are all distinct. Now by an identical argument, there must be two distinct vertices $x_3,x_4 \in K_t$ such that $|F_{x_3}(2)|=|F_{x_4}(2)|=2$. By Lemma \ref{pigeonhole-lemma}, we know that \[F_{x_3}(2) \cap F_{x_1}(1) \neq \emptyset, \quad F_{x_3}(2) \cap F_{x_2}(1) \neq \emptyset,  \quad F_{x_4}(2) \cap F_{x_1}(1) \neq \emptyset, \quad F_{x_4}(2) \cap F_{x_2}(1) \neq \emptyset. \] Hence $\{F_{x_3}(2), F_{x_4}(2)\}$ is either $ \{\{u,w\}, \{v,z\}\}$  or $\{\{u, z\}, \{v, w\}\}$.\\
\\
We shall only consider the first possibility, namely $\{F_{x_3}(2), F_{x_4}(2)\} = \{\{u,w\}, \{v,z\}\}$, as the second follows identically. By the same pigeonhole argument as above, we see that there must be two distinct vertices $x_5,x_6 \in K_t$ such that $|F_{x_5}(3)|=|F_{x_5}(3)|=2$. Then by Lemma \ref{pigeonhole-lemma} we know that $F_{x_5}(3)$ has nonempty intersections with each of $\{u,v\}, \{w,z\}, \{u,w\},$ and $ \{v,z\}$, so we see that $F_{x_5}(3) \in \{\{u,z\}, \{v,w\}\}$, and identically for $F_{x_6}(3)$. Thus since $F_{x_5}(3) \neq F_{x_6}(3)$, we know that $\{F_{x_5}(3), F_{x_6}(3)\} = \{\{u,z\}, \{v,w\}\}$. Now by the pigeonhole principle, we know that there must be some vertex $x_7 \in K_{n_4}$ such that $|F_{x_7}(4)|=2$. Then by Lemma \ref{pigeonhole-lemma}, we know that $F_{x_7}(4)$ has nonempty intersections with both $\{u,v\}$ and $\{w,z\}$. Hence $F_{x_7}(4)$ is a size-$2$ subset of the set $\{u,v,w,z\}$. But note that \[\{F_{x_1}(1), F_{x_2}(1), F_{x_3}(2), F_{x_4}(2), F_{x_5}(3), F_{x_6}(3)\}\] is precisely the set of all size-$2$ subsets of $\{u, v, w, z\}$, so there is some element in this set that has empty intersection with $F_{x_7}(4)$. This contradicts Lemma \ref{pigeonhole-lemma}. Hence when $n_4 \le t+1$ we must have $\gamma(G) > t+2$, as desired. 
\end{proof}

Theorem \ref{complete-class} follows immediately upon combining Theorem \ref{partial-class}, Proposition \ref{reduction}, and Theorem \ref{final-case}. 
\section{Domination in Unitary Cayley Graphs} \label{jacobsthal} Begin by recalling from the introduction that \textit{Jacobsthal's function}, denoted by $g(n)$, is the arithmetic function defined to be the smallest positive integer $m$ such that any set of $m$ consecutive positive integers contains an element that is coprime to $n$. For $k \ge 1$, let $p_k$ denote the $k$th smallest prime. Let \[h(n)=g(p_1\cdots p_n)\quad\text{and}\quad H(n)=\max_{\omega(k)=n}g(k),\]  where $\omega(k)$ denotes the number of distinct prime factors of $k$. Furthermore, let $M_j$ denote the set of all positive integers $n$ for which $\gamma_t(X_n) \le g(n)-j$. 

A famous conjecture of Jacobsthal, which was first raised in 1962 in a letter to Erd\H{o}s \cite{E}, stated that $h(n)=H(n)$ for all positive integers $n$. Yet in 2012, Hajdu and Saradha computationally found a counterexample, thereby disproving the conjecture. 

\begin{thm}[\cite{HS}, Theorem 1.2] \label{2012-comp}
We have $H(n)=h(n)$ for all positive integers $n \le 23$. However, $H(24)=236>234 = h(24)$.
\end{thm}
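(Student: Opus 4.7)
The plan is to treat this as a finite computational problem. I would first exploit the fact that $g(n) = g(\mathrm{rad}(n))$, reducing the maximization to squarefree integers with exactly $n$ distinct prime factors. I would then implement an efficient subroutine for evaluating $g(m)$ on squarefree $m$: conceptually this asks for the longest run of consecutive integers each of which is divisible by some prime $q_i \mid m$, and it can be computed by a sliding-window sieve over an interval whose length is bounded by an a priori estimate.

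For the inequality $H(24) > h(24)$, it suffices to exhibit a single squarefree $m^{\ast}$ with $\omega(m^{\ast}) = 24$ and $g(m^{\ast}) \ge 236$. This is the easier half, requiring only the evaluation subroutine and an explicit candidate witness to check.

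The equality $H(n) = h(n)$ for $n \le 23$ is the harder direction. Here I would need an a priori bound $B(n)$ guaranteeing that any squarefree $m$ with $g(m) > h(n)$ has all prime factors below $B(n)$. A workable strategy is to observe that if $g(m) \ge L$, then the prime factors $q_1 < \cdots < q_n$ of $m$ must cover some interval of length $L$ by their residue class $0$, forcing the density condition $\sum_i \lceil L/q_i \rceil \ge L$; combined with the ordering of the primes, this caps how large the largest prime can be while still admitting a valid covering. With $B(n)$ in hand the search space becomes finite, and one can in principle enumerate all squarefree integers with $n$ prime factors bounded by $B(n)$, evaluate $g$ on each, and verify that none exceeds $h(n)$.

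The main obstacle is that, even after bounding the primes, the number of candidate squarefree integers grows rapidly with $n$, so naive enumeration becomes infeasible as $n$ approaches $23$. I would therefore organize the computation as a branch-and-bound over sets of primes built up in increasing order, pruning a partial branch $\{q_1, \ldots, q_k\}$ as soon as an upper bound on $g(q_1 \cdots q_n)$ over all completions of that branch falls below $h(n)$. The design of sharp pruning bounds — essentially a careful combination of covering-system arithmetic with monotonicity properties of $g$ — is where the real technical work lies, and is the reason this verification requires substantial computation rather than a trivial check.
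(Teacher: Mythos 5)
The paper does not prove this statement at all: it is imported verbatim as Theorem 1.2 of Hajdu and Saradha \cite{HS}, so there is no internal proof to compare against. Your proposal is therefore best judged as a sketch of the computation that \cite{HS} actually carried out, and at that level it is directionally reasonable (reduction to squarefree $m$, a sieve subroutine for $g$, an explicit witness for the lower bound on $H(24)$, and an exhaustive search with pruning for the upper bounds). But two points are genuine gaps rather than omitted routine details.

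First, your finiteness reduction is not quite right as stated. It is false that every $m$ with $g(m)>h(n)$ has all prime factors below some bound $B(n)$: a prime $q$ larger than the interval length $L$ covers at most one integer of the interval, so one can always append arbitrarily large primes (or replace one huge prime by another) without changing the covering. The density inequality $\sum_i \lceil L/q_i\rceil \ge L$ forces many of the $q_i$ to be small, but it does not cap the largest one. The correct reduction, and the one underlying \cite{HS}, is that all primes exceeding $L$ are interchangeable ``wildcards'' each deleting a single arbitrary point of the interval, so the search is over subsets of primes at most $L$ together with a count of large primes; without this replacement argument your search space is not finite and the enumeration for $n\le 23$ never terminates. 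Second, your ``easier half'' only yields $H(24)\ge 236$ and says nothing about $h(24)=234$ or the matching upper bound $H(24)\le 236$; both of these require the same exhaustive machinery as the $n\le 23$ case, so the asserted asymmetry between the two halves is illusory. Finally, since the content of the theorem is a completed computation, a description of an algorithm that ``one could in principle'' run does not constitute a proof; the result must either be recomputed or, as the paper does, cited.
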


In \cite{Z}, Ziller extends the results of Hajdu and Saradha, computing $H(n)$ and $h(n)$ for all $n \le 43$. Among these additional data points, Jacobsthal's conjecture fails many times. In particular, the following result is proven:
\begin{thm}[\cite{Z}] \label{2019-comp}
We have $H(41)=566$ and $h(41)=550$. 
\end{thm}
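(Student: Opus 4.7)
The plan is to verify both equalities by a combination of combinatorial reduction and bounded computer search, extending the method Hajdu and Saradha used to establish Theorem~\ref{2012-comp}. The starting observation is that $g(n)$ depends only on the set $P$ of prime divisors of $n$: the quantity $g(n)-1$ equals the length of the longest block of consecutive integers such that every integer in the block is divisible by some prime in $P$. Equivalently, choosing residues $r_p \in \mathbb{Z}/p\mathbb{Z}$ for each $p \in P$, one seeks the largest $L$ for which $\{0, 1, \ldots, L-1\}$ is covered by the arithmetic progressions $\{i : i \equiv r_p \pmod{p}\}$. This converts both $h(41)$ and $H(41)$ into covering problems over finite families of $41$-element prime sets.

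For $h(41)$, the prime set is fixed as $P = \{2, 3, 5, \ldots, 179\}$ (the first $41$ primes), and I would perform a branch-and-bound search that assigns the residues $r_p$ in increasing order of $p$: after each partial assignment, one tracks which positions of a candidate block of target length are already covered and prunes whenever completion with the remaining primes is combinatorially impossible. The lower bound $h(41) \ge 550$ amounts to exhibiting an explicit residue assignment producing a covered block of length $549$, while the matching upper bound amounts to certifying exhaustively that no assignment achieves length $550$.

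For $H(41)$, one additionally optimizes over the choice of $41$-element prime set. The key reduction is that a prime $p$ larger than the target length $L$ can cover at most one position of any block of length $L$, so for $L$ in the expected range only primes below an explicit threshold contribute appreciably; standard swap arguments of the type used in \cite{HS} further restrict the search to prime sets containing all sufficiently small primes. This confines the enumeration to a manageable family of candidate prime sets, to each of which the covering search from the previous paragraph is applied. The main obstacle is controlling the combinatorial explosion: even with these reductions the search tree is enormous, and the proof must rely on aggressive symmetry-breaking on the choice of residues, carefully chosen branching heuristics, and explicit extremal certificates witnessing the values $550$ and $566$. Once these searches terminate and are independently audited, the equalities $h(41) = 550$ and $H(41) = 566$ follow directly from the extremal configurations together with the exhaustiveness of the enumeration.
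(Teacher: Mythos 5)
This statement is not proved in the paper at all: it is imported verbatim as a citation of Ziller's computational work \cite{Z} (just as Theorem~\ref{2012-comp} is imported from Hajdu--Saradha), so there is no internal argument to compare yours against. Your reformulation of $g$ is the correct and standard one --- $g(n)-1$ is the maximal length of an interval covered by residue classes $r_p \bmod p$ over the primes $p$ dividing $n$ --- and your branch-and-bound outline is a fair description of the general methodology behind such computations. But as a proof the proposal has a genuine gap in each of its three load-bearing steps. First, the lower bounds $h(41)\ge 550$ and $H(41)\ge 566$ require explicit certificates (a residue assignment for the first $41$ primes covering an interval of length $549$, and a specific $41$-element prime set with an assignment covering length $565$); you state that such certificates must be exhibited but do not exhibit them, and they cannot be reconstructed from the sketch. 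Second, the upper bounds rest entirely on an exhaustive search that has not been carried out; asserting that the search ``terminates and is independently audited'' is a description of what a proof would be, not a proof. Third, and most substantively, the reduction of $H(41)$ to a finite, enumerable family of prime sets is exactly the delicate point: you invoke ``standard swap arguments'' to claim that only primes below an explicit threshold matter and that all sufficiently small primes may be assumed present, but the whole lesson of Theorem~\ref{2012-comp} (namely $H(24)>h(24)$) is that the extremal prime set is \emph{not} simply the smallest primes, so any such pruning argument must be stated and justified precisely --- a prime $p>L$ covers at most one position of an interval of length $L$, but showing it can always be exchanged for a smaller prime without decreasing the covered length requires that a suitable smaller prime be available and unused. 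Without that lemma the enumeration is not known to be exhaustive and the upper-bound halves of both equalities are unsupported.
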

We are now ready to prove the lemma that will allow us to use these computational results to study the relationship between the total domination number of $X_n$ and $g(n)$. 
\begin{lem}\label{reinterp-lemma}
Let $G = \prod_{i=1}^tK_{n_i}$ and $H = \prod_{i=1}^t K_{m_i}$ where $t \ge 2$, $2 \le n_1 \le n_2 \le \cdots \le n_t$, $2 \le m_1 \le m_2 \le \cdots \le m_t$ and $n_i \le m_i$ for all $i \in \{1,2, \dots, t\}$. Then $\gamma_t(G) \ge \gamma_t(H)$. 
\end{lem}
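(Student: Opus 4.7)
The plan is to exhibit an explicit injection showing any total dominating set of $G$ yields a total dominating set of $H$ of the same size. I identify $V(K_{n_i})$ with $\{0,1,\dots,n_i-1\}$ and $V(K_{m_i})$ with $\{0,1,\dots,m_i-1\}$, so that since $n_i \le m_i$ there is a natural inclusion $V(K_{n_i}) \hookrightarrow V(K_{m_i})$, and consequently an inclusion $\iota : V(G) \hookrightarrow V(H)$.

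First, let $D$ be a minimum total dominating set of $G$, and view $D$ as a subset of $V(H)$ via $\iota$. I claim that $D$ totally dominates $H$. To verify this, fix any $y \in V(H)$ and construct an auxiliary vertex $y' \in V(G)$ by coordinate truncation: for each $i \in \{1,\dots,t\}$, set
\[
[y']_i = \begin{cases} [y]_i & \text{if } [y]_i < n_i, \\ 0 & \text{if } [y]_i \ge n_i. \end{cases}
\]
This is well-defined as a vertex of $G$ since $n_i \ge 2$. Because $D$ is a total dominating set of $G$, some $d \in D$ is adjacent to $y'$ in $G$, meaning $[d]_i \neq [y']_i$ for every $i$.

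Next I verify that the same $d$ is adjacent to $y$ in $H$, i.e. $[d]_i \neq [y]_i$ for every $i$. For coordinates with $[y]_i < n_i$, this follows directly from $[d]_i \neq [y']_i = [y]_i$. For coordinates with $[y]_i \ge n_i$, it is automatic, since $[d]_i \in \{0,1,\dots,n_i-1\}$ while $[y]_i \ge n_i$, so the two values cannot coincide. Hence $D$ totally dominates every $y \in V(H)$, so $\gamma_t(H) \le |D| = \gamma_t(G)$, which is the desired inequality.

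There is no real obstacle here beyond recognizing that the direct product's adjacency condition — that \emph{all} coordinates must differ — is precisely what makes the "extra" vertices of $K_{m_i}$ harmless: any vertex of $H$ whose $i$th coordinate lies outside $V(K_{n_i})$ is automatically distinguished in that coordinate from every element of $D$. The coordinate-truncation trick simply handles the remaining coordinates by reducing to the known domination property of $D$ in $G$.
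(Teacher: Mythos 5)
Your proof is correct and is essentially the same argument as the paper's: both replace the coordinates of a vertex of $H$ that fall outside $V(K_{n_i})$ by an arbitrary in-range value to produce an auxiliary vertex of $G$, apply total domination of $D$ there, and observe that the out-of-range coordinates are automatically distinguished from every element of $D$. The only cosmetic difference is that you argue directly while the paper phrases it as a contradiction.
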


\begin{proof}
Let $D$ be a total dominating set of $G$. We claim that $D$ is also a total dominating set of $H$. Suppose for the sake of contradiction that there is some element $x \in H$ that is not adjacent to any element in $D$. Let \[S = \{\ell \in \{1, 2, \dots, t\} : [x]_{\ell} \in V(K_{m_{\ell}})\setminus V(K_{n_{\ell}})\}.\] Then for each $\ell \in S$, take $v_{\ell}$ to be an arbitrary vertex of $K_{n_{\ell}}$. Now take $y \in V(G)$ such that \[[y]_j = \begin{cases} v_j & \text{ if  $j \in S$} \\ [x]_j & \text{ otherwise. }\end{cases}\] Now take some $d \in D$ that is adjacent to $y$, and note that for $j \not \in S$, we have $[d]_j \neq [x]_j$. For $j \in S$, we have $[d]_j \in V(K_{n_j})$ while $[x]_j \in V(K_{m_j}) \setminus V(K_{n_j})$ so again, we have $[d]_j \neq [x]_j$. Hence $d$ is adjacent to $x$, a contradiction. Thus $D$ is indeed a total dominating set of $H$, as desired. 
\end{proof}
It is noteworthy that this lemma allows us to abstract away the total domination number of $X_n$ from the inequality entirely and allows us to reduce the problem to just working with arithmetic properties of the Jacobsthal function. We now use Lemma \ref{reinterp-lemma} to show that $M_{16}$ is nonempty, thereby answering Burcroff's question \ref{gap3-que} in the affirmative. 
\begin{lem} \label{nonempty}
There exists a positive integer $n$ such that $\gamma_t(X_n) \le g(n)-16$, or equivalently, the set $M_{16}$ is nonempty. 
\end{lem}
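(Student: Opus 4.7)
The plan is to exploit directly the gap between $H(41) = 566$ and $h(41) = 550$ furnished by Theorem \ref{2019-comp}. By definition of $H$, there exists a positive integer $k$ with $\omega(k) = 41$ and $g(k) = 566$; since $g(m)$ depends only on the radical of $m$, I may replace $k$ by its radical and assume that $k = q_1 q_2 \cdots q_{41}$ is squarefree, with $q_1 < q_2 < \cdots < q_{41}$ distinct primes. I claim that this $k$ already lies in $M_{16}$.

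The key comparison is between $X_k \cong K_{q_1} \times \cdots \times K_{q_{41}}$ and $X_N$, where $N = p_1 p_2 \cdots p_{41}$ is the primorial of the first $41$ primes. Because each $q_i$ is at least as large as the $i$-th smallest prime $p_i$, Lemma \ref{reinterp-lemma} applies with $n_i = p_i$ and $m_i = q_i$, yielding $\gamma_t(X_N) \ge \gamma_t(X_k)$. Combining this with the standard upper bound $\gamma_t(X_m) \le g(m)$ (witnessed by taking $\{0, 1, \dots, g(m)-1\}$ as a total dominating set) and with Theorem \ref{2019-comp}, I obtain
\[
\gamma_t(X_k) \;\le\; \gamma_t(X_N) \;\le\; g(N) \;=\; h(41) \;=\; 550 \;=\; 566 - 16 \;=\; g(k) - 16,
\]
so $k \in M_{16}$, as desired.

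Conceptually, the argument shows that any squarefree integer $k$ with $g(k) > h(\omega(k))$ automatically belongs to $M_{g(k) - h(\omega(k))}$: Lemma \ref{reinterp-lemma} is doing all the work by converting the elementary inequalities $p_i \le q_i$ into an upper bound for $\gamma_t(X_k)$ in terms of $\gamma_t(X_N)$, which is then controlled via the trivial Jacobsthal bound applied to the primorial $N$. There is no substantive obstacle in the proof itself; the difficulty lay entirely in the reinterpretation of the problem via Lemma \ref{reinterp-lemma} and the recognition that the primorial-versus-maximizer gap $H(n) - h(n)$ is the correct quantity to mine for elements of $M_j$.
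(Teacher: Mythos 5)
Your argument is correct and is essentially identical to the paper's proof: both take the maximizer $Q$ witnessing $H(41)=566$, compare it coordinatewise to the primorial $P = p_1\cdots p_{41}$ via Lemma \ref{reinterp-lemma}, and close with the trivial bound $\gamma_t(X_P) \le g(P) = h(41) = 550$. The only (harmless) extra step is your explicit passage to the radical of $k$, which the paper leaves implicit.
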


\begin{proof}
By Theorem \ref{2019-comp}, we know that there exist $41$ distinct primes $q_1, q_2, \dots, q_{41}$ with $g(q_1q_2 \cdots q_{41}) = 566$. Moreover, if we let $p_1, p_2, \dots, p_{41}$ be the first $41$ primes, we also know that $g(p_1 p_2\cdots p_{41})=550.$ Now if we let $P=p_1p_2\cdots p_{41}$ and $Q=q_1q_2 \cdots q_{41}$, then by Lemma \ref{reinterp-lemma}, we have that \[\gamma_t(X_{Q}) \le \gamma_t(X_{P}) \le g(P)= g(Q)-16.\] Hence $Q \in M_{16}$.
\end{proof}
Before proceeding, we first state the following proposition from \cite{DI}, which is straightforward to prove. 
\begin{prop}[\cite{DI}, Lemma 2.2] \label{easyprop}
Let $n = r_1^{\alpha_1}r_2^{\alpha_2}\cdots r_k^{\alpha_k}$ where $r_1,r_2, \dots, r_k$ are distinct primes and $\alpha_1, \alpha_2, \dots, \alpha_k$ are positive integers. Furthermore, let $m=r_1r_2 \cdots r_k$. Then $\gamma_t(X_n) \ge \gamma_t(X_m)$. 
\end{prop}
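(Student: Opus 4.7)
The plan is to use the natural projection $\pi \colon \mathbb{Z}/n\mathbb{Z} \twoheadrightarrow \mathbb{Z}/m\mathbb{Z}$, which is well-defined since $m \mid n$, and to show that the image of any total dominating set of $X_n$ under $\pi$ is itself a total dominating set of $X_m$. Geometrically this reflects the fact that each $X_{r_i^{\alpha_i}}$ is the complete $r_i$-partite graph whose parts are the fibres of reduction modulo $r_i$, so that $X_n$ can be viewed as a blow-up of $X_m$.

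First I would verify that $\pi$ preserves adjacency. Suppose $x$ and $y$ are adjacent in $X_n$, so that $\gcd(x - y,\, n) = 1$. Then $r_i \nmid (x - y)$ in $\mathbb{Z}$ for each $i$, and since every prime divisor of $m$ is some $r_i$, the reduction $\pi(x) - \pi(y) \equiv x - y \pmod{m}$ is coprime to $m$; in particular it is nonzero in $\mathbb{Z}/m\mathbb{Z}$. Hence $\pi(x)$ and $\pi(y)$ are distinct and adjacent in $X_m$.

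Next, let $D$ be a minimum total dominating set of $X_n$. To see that $\pi(D)$ totally dominates $X_m$, fix any $y \in V(X_m)$ and choose an arbitrary lift $y' \in \pi^{-1}(y) \subseteq V(X_n)$. Since $D$ totally dominates $X_n$, there is some $d \in D$ adjacent to $y'$ in $X_n$; by the adjacency-preservation step, $\pi(d)$ is adjacent to $\pi(y') = y$ in $X_m$. Thus every vertex of $X_m$ has a neighbour in $\pi(D)$, and we obtain
\[
\gamma_t(X_m) \;\le\; |\pi(D)| \;\le\; |D| \;=\; \gamma_t(X_n),
\]
as desired.

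There is no substantive obstacle here — the argument amounts to the one-line observation that the coprimality condition defining edges of $X_n$ descends under reduction modulo $m$. The only minor point to watch is that $\pi(x) \neq \pi(y)$ whenever $x \sim y$ in $X_n$ (so that the image of an edge is a genuine edge rather than a loop), and this is already taken care of by the coprimality computation above.
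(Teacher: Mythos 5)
Your argument is correct and is the standard one: since $m$ and $n$ have the same prime divisors, coprimality to $n$ of a difference implies coprimality to $m$, so the reduction map $\mathbb{Z}/n\mathbb{Z}\to\mathbb{Z}/m\mathbb{Z}$ sends edges of $X_n$ to edges of $X_m$ and hence sends total dominating sets to total dominating sets of no larger size. The paper itself gives no proof (it cites this as a lemma of Defant and Iyer), and your projection argument, including the check that $\pi(x)\neq\pi(y)$ so no loops arise, is exactly the intended ``straightforward'' verification.
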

This proposition yields the following useful corollary, which allows us to restrict our attention to squarefree integers. 
\begin{cor} \label{squarefree}
Let $n = r_1^{\alpha_1} r_2^{\alpha_2} \cdots r_k^{\alpha_k}$ where $r_1, r_2, \dots, r_k$ are distinct primes and $\alpha_1, \alpha_2, \dots, \alpha_k$ are nonnegative integers. Furthermore, let $m = r_1r_2 \cdots r_k$. Suppose that $n \in M_j$ for some integer $j \ge 1$. Then $m \in M_j$.
\end{cor}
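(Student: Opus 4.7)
The plan is to combine two observations: first, the arithmetic observation that the value of Jacobsthal's function depends only on the set of prime divisors, giving $g(n) = g(m)$; and second, the already-cited Proposition \ref{easyprop}, which handles the total-domination-number side. Chaining these yields the desired inequality for $m$ with no extra work.

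First I would establish $g(n) = g(m)$. Recall that $g(n)$ is defined purely through the condition ``coprime to $n$''. Since $n = r_1^{\alpha_1}\cdots r_k^{\alpha_k}$ and $m = r_1 \cdots r_k$ share exactly the same set of prime divisors $\{r_1, \dots, r_k\}$, for any integer $a$ we have $\gcd(a,n) = 1$ if and only if $\gcd(a,m) = 1$. Therefore a window of consecutive integers contains an element coprime to $n$ if and only if it contains an element coprime to $m$, and so the minimal window length is the same for both; that is, $g(n) = g(m)$.

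Next I would invoke Proposition \ref{easyprop}, which gives $\gamma_t(X_n) \ge \gamma_t(X_m)$ precisely in this setting (same primes, all exponents possibly reduced to $1$). Combined with the hypothesis $n \in M_j$, i.e.\ $\gamma_t(X_n) \le g(n) - j$, we obtain
\[
\gamma_t(X_m) \;\le\; \gamma_t(X_n) \;\le\; g(n) - j \;=\; g(m) - j,
\]
which is exactly the statement that $m \in M_j$.

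There is no real obstacle here: the corollary is essentially a one-line deduction, with the only content being the elementary prime-support observation $g(n) = g(m)$ and the direct application of Proposition \ref{easyprop}. The purpose of the statement is organizational, allowing subsequent arguments (such as the construction of elements of $M_j$ built from squarefree products of primes) to assume without loss of generality that the integers in question are squarefree.
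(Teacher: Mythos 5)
Your proof is correct and matches the paper's own argument exactly: both note that $g(n)=g(m)$ since the two integers share the same prime support, and then chain this with Proposition \ref{easyprop} to conclude $\gamma_t(X_m) \le \gamma_t(X_n) \le g(n)-j = g(m)-j$. Nothing further is needed.
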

\begin{proof}
First note that by definition, $g(n)=g(m)$. Thus if $\gamma_t(X_n) \le g(n)-j$, then by Proposition \ref{easyprop}, we know that \[\gamma_t(X_m) \le \gamma_t(X_n) \le g(n)-j=g(m)-j,\] as desired. 
\end{proof}
We now expand upon techniques used in \cite{B} and \cite{DI} to prove Theorem \ref{size-of-M-thm}, namely that if $M_j$ is nonempty, then $\omega(M_j)$ is unbounded.  

\begin{proof}[Proof of Theorem \ref{size-of-M-thm}]
Take $s \in M_j$ and invoking Corollary \ref{squarefree}, suppose that $s = q_1 q_2 \cdots q_m$ where $q_1, q_2, \dots, q_m$ are distinct primes. Now let $k \ge 1$ be any integer. Let $\ell = k \phi(s)$ where $\phi$ denotes Euler's totient function. Finally, take $r_1, \dots, r_{\ell}>ks + g(s)$ to be primes. We claim that $n = s \cdot \prod_{i=1}^{\ell}r_i$ is in $M_{16}$. \\
\\
Take $1 \le x \le s$ such that no integer in the set $\{x,x+1, \dots, x+g(s)-2\}$ is coprime to $s$. Note that this is possible as by the definition of $g(s)$, there must exist a sequence of $g(s)-1$ consecutive positive integers, none of which is coprime to $s$. Now let $S=\{x,x+1, \dots, x+ks+g(s)-2\}$ and let $R = \{z \in S: \gcd(z,s)=1\}$. Note that $|R| = k \phi(s)=\ell$, so let $R = \{z_1,z_2, \dots, z_{\ell}\}$.  Now by the Chinese Remainder Theorem, we know there is a unique residue $h \pmod{n}$ such that $h \equiv x \pmod{s}$ and $h \equiv -z_i \pmod{r_i}$ for all $1 \le i \le \ell$. Finally, note that $\{h, h+1, \dots, h+ks+g(s)-2\}$ is a set of $ks+g(s)-1$ consecutive integers, none of which is coprime to $n$. Consequently, we have $g(n) \ge ks+g(s)$. \\
\\
Now we shall construct a total dominating set of $X_n$ of size $ks+\gamma_t(X_s)$. First let $D=\{d_1, \dots, d_{\gamma_t(X_s)}\}$ be a total dominating set of $X_s$. For $1 \le i \le \gamma_t(X_s)$, define $x_i$ to be the unique vertex in $X_n$ such that $x_i \equiv d_i \pmod{s}$ and $x_i \equiv -1 \pmod{r_j}$ for all $1 \le j \le \ell$. We claim that $D' =\{0, 1, \dots, ks-1\} \cup \{x_1, \dots, x_{\gamma_t(X_s)}\}$ is a total dominating set of $X_n$. To confirm this, suppose for the sake of contradiction that $y \in V(X_n)$ is not adjacent to any element of $D'$. Then we know that $y$ is not adjacent to any element in $\{0, 1, \dots, ks-1\}$, so in particular, the set \[S=\{y, y-1, \dots, y-(ks-1)\}\] consists of $ks$ consecutive integers, none of which are coprime to $n$. Now for $r \in \mathbb{N}$, let $B(r) = \{h \in S: \gcd(h,r) \neq 1\}$. First note that $|B(s)|=k(s-\phi(s))$ and for each $1 \le j \le \ell$, $|B(r_j)| \le 1$. Then observe that \[S = B(s) \cup \bigcup_{j=1}^{\ell} B(r_j),\] and by a union bound, we have \begin{align*}ks &= |S| \\ &= \left| B(Q) \cup \bigcup_{j=1}^{\ell} B(r_j)\right| \\ & \le |B(Q)|+\sum_{j=1}^{\ell}|B(r_j)| \\ & \le k(s-\phi(s)) + k\phi(s) \\ &= ks. \end{align*} Thus in fact, every inequality above must be an equality, implying that $B(r_j)=1$ for all $1 \le j \le \ell$. Hence for each $1 \le j \le \ell$, there must be some corresponding $0 \le h \le ks-1$ such that $y \equiv h \pmod{r_j}$. Then since $r_j>ks+g(s)$ for all $1 \le j \le \ell$, we know that we cannot have $y \equiv -1 \pmod{r_j}$ for any $ 1 \le j \le \ell$. Then since $D$ is a total dominating set of $X_s$, we know that there must be some $1 \le h \le \gamma_t(X_s)$ such that $\gcd(y-d_h,s)=1$. Then since $y-x_h \not \equiv 0 \pmod{r_j}$ for all $1 \le j \le \ell$, we know that $\gcd(y-x_h,n)=1$, so $y$ is adjacent to $x_h$. This contradicts our original assumption. Consequently, $D'$ must be a total dominating set of $X_n$, as claimed. \\
\\
Hence we know that $\gamma_t(X_n) \le ks+\gamma_t(X_s)$ and $ks+g(s) \le g(n)$. Then since $s \in M_j$, we know that $\gamma_t(X_s) \le g(s)-j$, which in turn implies that $\gamma_t(X_n) \le ks +g(s)-j \le g(n)-j$, as desired. 
\end{proof}
Theorem \ref{gap-16} now immediately follows from Lemma \ref{nonempty} and Theorem \ref{size-of-M-thm}. Interestingly, if we apply Lemma \ref{reinterp-lemma} on Theorem \ref{2012-comp} and then invoke Theorem \ref{size-of-M-thm}, we obtain a new family of $n$ with arbitrarily many prime factors that satisfy $\gamma_t(X_n) \le g(n)-2$, distinct from the family found by Burcroff in Theorem \ref{gap2-const}. Now the computational results of Ziller \cite{Z} seem to suggest the following result, which we conjecture to be true:
\begin{conj}
For all positive integers $j$, there exists some positive integer $k$ along with primes $q_1<q_2< \cdots < q_k$ and primes $r_1<r_2< \cdots <r_k$ where $q_i \le r_i$ for all $i \in \{1,2, \dots, k\}$, that satisfy the inequality \[g(q_1q_2 \cdots q_k) \le g(r_1r_2 \cdots r_k)-j.\]
\end{conj}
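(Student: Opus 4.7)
The plan is to reduce the conjecture to a cleaner unboundedness statement and then attempt an inductive construction. Specifically, setting $q_i = p_i$ (the $i$-th smallest prime), the strict ordering $q_1 < q_2 < \cdots < q_k$ combined with $q_i \le r_i$ is automatically satisfied for any choice of distinct primes $r_1 < r_2 < \cdots < r_k$, since necessarily $r_i \ge p_i$. Consequently, the conjecture is implied by --- and is essentially equivalent to --- the assertion that $H(k) - h(k)$ is unbounded as $k \to \infty$, which is the natural strengthening of Hajdu--Saradha's disproof of Jacobsthal's conjecture.

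With this reduction in hand, the natural attack is inductive in the gap size, using Ziller's configuration at $k = 41$ (Theorem \ref{2019-comp}) as the base case witnessing a gap of $16$. Given a configuration $(r_1, \ldots, r_k)$ with $g(r_1 \cdots r_k) \ge h(k) + j$, I would attempt to adjoin a carefully chosen prime $r_{k+1}$ so that $g(r_1 \cdots r_k \cdot r_{k+1}) \ge h(k+1) + j + 1$. The heuristic is that a single large prime $r_{k+1}$, with its residue class chosen via the Chinese Remainder Theorem to plug exactly one vacancy adjacent to the extremal non-coprime run modulo $r_1 \cdots r_k$, can extend the long run by at least one position, while the primorial extension from $h(k)$ to $h(k+1)$ is governed only by local interactions with $p_{k+1}$ and is typically the smaller increment. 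Iterating this one prime at a time would yield gaps $17, 18, 19, \ldots$ and hence unboundedness.

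The main obstacle is promoting this heuristic to a rigorous argument: one needs a quantitative handle on how $g$ behaves under multiplication by a new prime, together with a comparison against the primorial increment $h(k+1) - h(k)$. The extremal run modulo $r_1 \cdots r_k$ need not extend cleanly by a single position, and in the worst case $h(k+1) - h(k)$ can be of size comparable to $p_k$, making a net gain non-obvious to secure at every step. An alternative approach would be to construct, for each target length $L$, a direct covering of some interval $\{x, x+1, \ldots, x+L-1\}$ by residue classes modulo distinct primes $r_1 < \cdots < r_k$ with $r_i \ge p_i$, essentially a weighted covering-system design problem. However, Iwaniec's bound $g(n) \ll \log^2 n$ together with comparable lower bounds for $h(k)$ leave only a logarithmic slack to work with, so substantial structural insight into the arithmetic of record-setting non-coprime runs --- presumably identifying prime configurations whose residues align more favorably than the first $k$ primes --- appears necessary to make either route succeed.
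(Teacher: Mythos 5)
This statement is stated in the paper as a conjecture and is given no proof there; it is left open, with only the numerical evidence of Theorems \ref{2012-comp} and \ref{2019-comp} (which settle the cases $j\le 16$ via $H(41)-h(41)=16$) cited as motivation. So there is no paper argument to compare against, and the question is only whether your proposal actually closes the conjecture. It does not. Your opening reduction is the one sound step: taking $q_i=p_i$ makes $q_i\le r_i$ automatic, so the conjecture would follow from $\limsup_k\bigl(H(k)-h(k)\bigr)=\infty$. But calling this ``essentially equivalent'' overstates it --- nothing in the paper (or elsewhere) shows that $g$ is minimized at the primorial among squarefree $n$ with $\omega(n)=k$; Lemma \ref{reinterp-lemma} gives monotonicity for $\gamma_t$, not for $g$, and the whole point of the conjecture is that $g$ can move the ``wrong'' way under enlarging primes. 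The conjecture as stated could in principle be strictly weaker than unboundedness of $H-h$.

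The genuine gap is in the inductive step, and it is quantitative, not merely a matter of polish. Adjoining a single large prime $r_{k+1}$, positioned by the Chinese Remainder Theorem, blocks essentially one additional integer adjacent to the extremal run modulo $r_1\cdots r_k$, so it buys $g(r_1\cdots r_{k+1})\ge g(r_1\cdots r_k)+O(1)$. To maintain the invariant $g(r_1\cdots r_{k+1})\ge h(k+1)+j+1$ you must outpace the increment $h(k+1)-h(k)$, which is not bounded by an absolute constant and is on average of order $\log^2 k$ (the known bounds place $h(k)$ between roughly $k\log^2 k/(\log\log k)^2$ and Iwaniec's $O(k^2\log^2 k)$-type upper bound). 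A one-residue-class-per-step construction therefore cannot keep pace, and the induction collapses after finitely many steps; you acknowledge this obstacle yourself but offer no mechanism to overcome it. The covering-design reformulation in your last paragraph is a reasonable framing of what a real proof would require, but no construction is supplied. In short: the statement remains a conjecture, and the proposal should be read as a discussion of strategy rather than a proof.
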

If this statement were true, then an application of Lemma \ref{reinterp-lemma} along with Theorem \ref{size-of-M-thm} would imply that the set $\omega(M_j)$ is unbounded for all $j$, thus proving Conjecture \ref{unbounded-gap} and completely resolving the original question posed by Defant and Iyer \cite{DI}.
\section*{Acknowledgements}
 This research was funded by NSF / DMS grant 1659047 and NSA grant H98230-18-1-0010 as part of the 2019 Duluth Research Experience for Undergraduates (REU) program. The author would like to thank Joe Gallian for suggesting the problem and running the program. The author would also like to thank Aaron Berger, Amanda Burcroff, and Colin Defant for helpful discussions.  

\bigskip

\end{document}